\theoremstyle{plain}
\newtheorem{thm}{Theorem}[section]
\newtheorem{lem}{Lemma}[section]
\newtheorem{prop}{Proposition}[section]
\newtheorem{cor}{Corollary}[section]
\theoremstyle{definition}
\newtheorem{defn}{Definition}[section]
\theoremstyle{remark}
\newtheorem{rmk}{Remark}[section]
\newcommand{\norm}[1]{\left\lVert#1\right\rVert}
\newcommand{\tr}[1]{\operatorname{tr}\left(#1\right)}
\newcommand{\diag}[1]{\left[#1\right]}
\newcommand{\R}{\mathbb{R}}
\let\orgdescriptionlabel\descriptionlabel
\renewcommand*{\descriptionlabel}[1]{%
  \let\orglabel\label
  \let\label\@gobble
  \phantomsection
  \edef\@currentlabel{#1\unskip}%
  \let\label\orglabel
  \orgdescriptionlabel{#1}%
}
\numberwithin{equation}{section}
\begin{document}
	
\title[ ABP estimate and Harnack inequality for fully nonlinear pseudo-$p$-Laplacian]{  ABP estimate and Harnack inequality for a class of degenerate fully nonlinear pseudo-$p$-Laplacian equations}
%%%%% AUTHORS %%%%%

\author[Byun]{Sun-Sig Byun}
\address{Department of Mathematical Sciences and Research Institute of Mathematics,
	Seoul National University, Seoul 08826, Republic of Korea}
\email{byun@snu.ac.kr}

\author[Kim]{Hongsoo Kim}
\address{Department of Mathematical Sciences, Seoul National University, Seoul 08826, Republic of Korea}
\email{rlaghdtn98@snu.ac.kr}

\thanks {S.-S. Byun was supported by Mid-Career Bridging
Program through Seoul National University. H. Kim was supported by the National Research Foundation of Korea(NRF) grant funded by the Korea government [Grant No. 2022R1A2C1009312].}

\makeatletter
\@namedef{subjclassname@2020}{\textup{2020} Mathematics Subject Classification}
\makeatother
% \date{\today}
% It is required to enter 2010 MSC.
\subjclass[2020]{35B65, 35D40, 35J15,  35J70}
% Please provide minimum  5 keywords.
\keywords{Fully nonlinear elliptic equations, Pseudo-$p$-Laplacian, Harnack inequality}

\everymath{\displaystyle}

\begin{abstract}
	We prove Aleksandrov-Bakelman-Pucci estimates and Harnack inequalities for viscosity solutions of a class of degenerate  fully nonlinear pseudo-$p$-Laplacian equations in nondivergence form.
    Our main approach is an adaptation of the sliding paraboloid method with anisotropic functions tailored to the coordinatewise degeneracy.
\end{abstract}

\maketitle

\section{Introduction} \label{sec1}
In this paper, we study Aleksandrov-Bakelman-Pucci estimates and Harnack inequalities for viscosity solutions of the following degenerate pseudo-$p$-Laplacian inequalities in nondivergence form:
\begin{align} \label{PDE}
    \begin{cases}
    \mathcal{M}^-_{\lambda,\Lambda} \left( \diag{|D_iu|^{p/2}} D^2u \diag{|D_iu|^{p/2}} \right) -\Lambda|Du|^{p+1} \leq f(x), \\
     \mathcal{M}^+_{\lambda,\Lambda} \left( \diag{|D_iu|^{p/2}} D^2u \diag{|D_iu|^{p/2}} \right)  +\Lambda|Du|^{p+1} \geq f(x),
    \end{cases}
     \quad \text{in } B_1,
\end{align}
where $p \geq 0$, $0<\lambda \leq \Lambda$, $f \in L^n(B_1)$ and $\diag{|z_i|^{p/2}} =\operatorname{diag}(|z_i|^{p/2})$ is the diagonal matrix with entries $|z_i|^{p/2}$ on the diagonal.

The following are examples that satisfy the aforementioned inequalities.
\begin{enumerate}
    \item the degenerate pseudo-$(p+2)$-Laplacian equation. 
        \begin{align*}
        \tilde{\Delta}_{p+2}u(x)=  \frac{1}{p}\sum_i \partial_i(|\partial_iu|^{p+1}\partial_{i}u)=\sum_i |\partial_iu|^{p}\partial_{ii}u = f(x).    \end{align*}
    \item For some symmetric matrix function $(a_{ij}(x))$ with eigenvalues in $[\lambda,\Lambda]$ and some $|b_i(x)|\leq\Lambda$, 
        \begin{align*}
        \sum_{i,j} a_{ij}(x)|\partial_iu|^{p/2}|\partial_ju|^{p/2} \partial_{ij}u + \sum_i b_i(x)\partial_iu= f(x).    \end{align*}
\end{enumerate}
Operators of pseudo-$p$-Laplacian type arise naturally in anisotropic diffusion and orthotropic models, where the response of the medium depends on individual coordinate directions rather than on the Euclidean gradient.
Such structures appear in variational problems with directional growth, transport phenomena in layered or composite media, and anisotropic regularization mechanisms in applied mathematics.
The coordinatewise degeneracy encoded in \eqref{PDE} reflects situations in which diffusion may vanish or weaken along specific directions while remaining effective in others.

Therefore, the pseudo-$p$-Laplacian equation has long been a central theme in the analysis of partial differential equations and has been the subject of extensive research.
In the divergence form, the pseudo-$p$-Laplacian arises as the Euler–Lagrange equation corresponding to the anisotropic functional:
\begin{align*}
    w\rightarrow\int\sum_i|D_iw|^{p}.
\end{align*}
The Lipschitz regularity of weak solutions of pseudo-$p$-Laplacian was established by Bousquet, Brasco, Leone, and Verde \cite{Bousquet18} via Moser’s iteration.
For further results of higher regularity of pseudo-$p$-Laplacian, see \cite{Bousquet20,Bousquet16,Lindquist18,Brasco17,Bousquet23}.
In contrast, the basic regularity results such as H\"older continuity and Harnack inequalities follow from standard methods for the $p$-laplace equation (see Chapter 6 and 7 of \cite{Giusti03}) due to the equivalence of the two integrands
\begin{align*}
    \sum_i|D_iu|^{p} \approx |Du|^p.
\end{align*}
In the nondivergence setting, Lipschitz regularity for viscosity solutions of pseudo-$p$-Laplacian was established by Demengel \cite{Demengel162} based on the Ishii-Lions method, and Birindelli and Demengel \cite{Demengel16} proved existence and Lipschitz regularity for a broader class of equations including the pseudo-$p$-Pucci's operator, which is the motivation for the inequalities \eqref{PDE}. 
However, the basic regularity theory in nondivergence case does not appear to be standard and differs from that of the divergence case.
The reason is that we do not have the equivalence for the two nondivergence forms:
\begin{align*}
    \tilde{\Delta}_{p+2}u=\sum_i|D_iu|^{p} D_{ii}u\not\approx |Du|^p\sum_iD_{ii}u \approx\Delta_{p+2}u.
\end{align*}
For the $p$-Laplacian, the equation is uniformly elliptic where $|Du|>c$ for some $c>0$.
Therefore, one can apply the results of Imbert and Silvestre \cite{Imbert16} or Mooney \cite{Mooney15}, who established Harnack inequalities for equations which hold only where the gradient is large.

On the other hand, the pseudo-$p$-Laplacian exhibits coordinatewise degeneracy: it loses ellipticity whenever a partial derivative vanishes, that is, at points where $D_ju=0$ for some $j \in \{1,\cdots,n\}$.
This degeneracy is unbounded and occurs on a set much larger than $\{|Du|=0\}$, so the preceding results do not apply directly.
More precisely, if $D_j u(x)=0$ for some $j$, the second derivative $D_{jj}u(x)$ may become unbounded; in contrast, if there exists a direction $i$ with $D_i u(x)\neq 0$, then $D_{ii}u(x)$ remains controlled and no loss of regularity occurs along the $i$-th direction. 
Consequently, discarding data at points where $D_j u(x)=0$ also neglects the information carried by nondegenerate directions $i$ with $D_i u(x)\neq 0$,  and this prevents us from establishing regularity.
Therefore, it is essential to exploit the information from nondegenerate directions at points of degeneracy.

Despite these difficulties caused by strong anisotropy and coordinatewise degeneracy, we develop the basic regularity of a class of degenerate pseudo-$p$-Laplacian in nondivergence form.
The first main result is the Aleksandrov-Bakelman-Pucci estimates for the psuedo-$p$-laplacian.

\begin{thm} \label{Main1}
Let $p \ge 0$. 
Let $u \in C(\Omega)$ satisfy 
\begin{align} \label{supsol}&\mathcal{M}^+_{\lambda,\Lambda} \left( \diag{|D_iu|^{p/2}} D^2u \diag{|D_iu|^{p/2}} \right) +\Lambda|Du|^{p+1} \geq f(x) \quad \text{ in } \Omega.
\end{align}
Then there exists a constant $C = C(n,\lambda,\Lambda,p,d) > 0$ such that
\[
\sup_\Omega u
\le
\sup_{\partial\Omega} u^+
+ C
\|f^{-}\|_{L^n(\Gamma^+(u))}^{\frac{1}{1+p}},
\]
where $d=\operatorname{diam}(\Omega)$ and $\Gamma^+(u)$ is the upper contact set defined in \eqref{upp}.

Analogously, if $u \in C(\Omega)$ satisfies
\begin{align*}
\mathcal{M}^-_{\lambda,\Lambda} \left( \diag{|D_iu|^{p/2}} D^2u \diag{|D_iu|^{p/2}} \right) -\Lambda|Du|^{p+1} \leq f(x) \quad \text{ in } \Omega.
\end{align*}
Then there exists a constant $C=C(n,\lambda,\Lambda,p,d)>0$ such that
\begin{align*}
\sup_{\Omega}u^- \leq \sup_{\partial \Omega}u^- +C \norm{f^+}_{L^n(  \Gamma^+(-u))}^{\frac{1}{1+p}}.
\end{align*}
\end{thm}

The second main result is an Harnack inequality with $L^n$ data in the right-hand side.

\begin{thm} \label{Main2}
Let $p \geq 0$. Assume that $u \in C(B_1)$ is nonnegative and satisfies \eqref{PDE} in the viscosity sense in $B_1$, where
$f \in C(B_1)\cap L^n(B_1)$.
Then there exists a constant $C = C(n,\lambda,\Lambda,p) > 1$ such that
$$
\sup_{B_{1/2}} u
\;\le\;
C\Bigl(
\inf_{B_{1/2}} u
\;+\;
\|f\|_{L^n(B_1)}^{\frac{1}{1+p}}
\Bigr).
$$
\end{thm}

The direct consequence of the Harnack inequality is the H\"older regularity.

\begin{cor} \label{maincor}
Let $u \in C(B_1)$ satisfy \eqref{PDE} in the viscosity sense and $f \in C(B_1)\cap L^n(B_1)$.
Then there exist constants $\alpha = \alpha(n,\lambda,\Lambda,p) > 0$ and
$C = C(n,\lambda,\Lambda,p) > 1$ such that
\[
\|u\|_{C^\alpha(B_{1/2})}
\le
C\Big(
\|u\|_{L^\infty(B_1)}
+ \|f\|_{L^n(B_1)}^{\frac{1}{1+p}}
\Big).
\]
\end{cor}

We now outline the main ideas of our proof.
The main idea of the ABP estimate is using the area formula to estimate the image of $(|D_iu|^{p}D_iu)$, instead of $Du$, which is motivated in \cite{Argiolas11}.
See also \cite{Baasandorj242,Imbert11,Davila09,Kim252} for more results about the ABP estimates.

Our primary tool for the Harnack inequality is the sliding paraboloid method, originally introduced by Cabre \cite{Cabre97} and developed by Savin \cite{Savin07}.
We slide the paraboloid from below or above until it touches the graph of a solution $u$ under consideration.
Then the touching points have some good properties and by applying the area formula, the measure of touching points can be estimated in terms of the measure of the corresponding vertex points.
This approach has been successfully employed to prove the Krylov-Safanov results and $W^{2,\delta}$ regularity results for elliptic and parabolic equations.
See also \cite{Li18,Mooney15,Baasandorj24,Zhenyu25,Wang13,Colombo14,Chang-Lara23,Le18}.
For degenerate equations, by choosing a suitable functions such as cusps $|x|^{1/2}$ in \cite{Imbert16,Banerjee22,Byun252}, or other power functions $|x|^{1+\alpha}$ in \cite{Byun25,Vedansh25} instead of the standard paraboloid $|x|^2$, it is possible to handle the degeneracy of the equations.
In our setting, we take the following anisotropic function as our `paraboloid':
\begin{align*}
        \varphi(x) = -\frac{1+p}{2+p}\left(|x_1|^{1+\frac{1}{1+p}} + \cdots +|x_n|^{1+\frac{1}{1+p}} \right),
\end{align*}
to overcome the degeneracy of the equations.
Observe that $\varphi(x)$ is a standard paraboloid when $p=0$.
The motivation of the choice of $\varphi$ is that it satisfies the following identity: $-(1+p)\diag{|D_i\varphi|^{p/2}}D^2\varphi\diag{|D_i\varphi|^{p/2}} = I_n$.
It should be noted, however, that $\varphi(x)$ fails to be $C^2$ on the set $\bigcup\{z_i=0\}$, which causes additional difficulties in the proof.

To explain this in more detail, we slide a paraboloid $\varphi(z-y)$ with vertex $y$ from below until it touches $u$ at $x$.
Since $Du(x)=D\varphi(x-y)$, we have $$y_i = x_i+|D_iu|^pD_iu(x).$$
for each $i=\{1,\cdots, n\}$.
Therefore, by differentiating with respect to $x$ and taking determinants, we obtain
\begin{align*}
    \det (D_xy) = \det(I+ (p+1)\diag{|D_iu|^{p/2}}D^2u\diag{|D_iu|^{p/2}}).
\end{align*}
From the fact $D^2u(x)\geq D^2\varphi(x-y)$ and using the equalities \eqref{PDE}, we deduce that $\det (D_xy) \leq C(1+|f(x)|^n)$, and then apply area formula to get the desired result.
Note that this argument can be applied where $D^2\varphi(x-y)$ is well-defined. 

If the inequalities degenerate at a point $x$, for example only on $n$-th direction, say $D_nu(x)=0$ and $D_iu(x)\neq0$ for $i\neq n$, then we have $x_n-y_n=0$ which causes $D_{nn}\varphi(x-y)$ to blow up.
In this case, we discard $n$-th direction and carry out the argument in the
remaining nondegenerate directions by restricting the argument to the $(n-1)$-dimensional slice $H=\{z\in\mathbb{R}^n:z_n=y_n\}$.
Observe that the restricted paraboloid $\varphi_{\{n\}}(x-y)$ as a function on the $(n-1)$-dimensional space $H$ is smooth at $x$ since $x_i-y_i\neq0$ for $i\neq n$.
Also, putting $D_nu(x)=0$, the inequalities \eqref{PDE} in $n$ dimensions reduce to the same form of the inequalities in $(n-1)$ dimensions.
Hence, by repeating the above argument with the dimension reduced from $n$ to $(n-1)$ and then applying the Fubini's theorem to sum over the slices, we obtain the desired result, Lemma \ref{measlem}. 

We next construct a barrier function, which is needed to establish the doubling property, Lemma \ref{doublem}.
By taking the standard barrier function $\Phi(x)=|x|^{-a}$ for sufficiently large $a>0$, one can verify that it is a subsolution in the sense that $\mathcal{M}^-\left(\diag{|D_i\Phi|^{p/2}}D^2\Phi\diag{|D_i\Phi|^{p/2}}\right) -\Lambda|D\Phi|^{p+1} >1$.
If we consider $L^\infty$ data of $f$ with $\norm{f}_{L^\infty} <1$, then the doubling property follows directly from the comparison principle.
However, this approach fails when only $L^n$ data for $f$ are available.
Instead, we slide the barrier and use the area formula in order to measure the set of touching points by the set of vertex points as in Lemma \ref{measlem}.
Our selection of the barrier function is the following anisotropic function
\begin{align*}
    \Phi(x)=|\varphi(x)|^{-a} =\left(|x_1|^{1+\frac{1}{1+p}} + \cdots +|x_n|^{1+\frac{1}{1+p}} \right)^{-a},
\end{align*}
for sufficiently large $a>0$.
The reason for using $|\varphi(x)|$ instead of $|x|$ is that it matches with the degeneracy of the equation.
We also note that $\Phi(x)$ fails to be $C^2$ on the set $\bigcup\{z_i=0\}$, so that a similar restriction argument at degenerate points, as in Lemma \ref{measlem}, is required.
Once Lemma \ref{measlem} and Lemma \ref{doublem} is proved, the proof of Harnack inequalities follows by standard arguments (see \cite{Caffarelli95} or \cite{Imbert16}). 

The paper is organized as follows.
In Section \ref{sec2}, we introduce the notation and preliminaries.
In Section \ref{sec3}, we prove the Aleksandrov-Bakelman-Pucci estimates, Theorem \ref{Main1}.
In Section \ref{sec4}, we prove the measure estimate, Lemma \ref{measlem}.
In Section \ref{sec5}, we prove the doubling property, Lemma \ref{doublem}, and complete our proof of the main theorems.

\section{Notations and Preliminaries} \label{sec2}

Throughout this paper, for $x \in \mathbb{R}^n$, we write $x=(x_1,\cdots ,x_n)$. We denote by $B_r(x_0) = \{ x \in \mathbb{R}^n : |x-x_0| <r \}$ for a point $x_0 \in \mathbb{R}^n$ and $B_r = B_r(0)$.
We write a cube as $Q_r^n(x_0) =Q_r(x_0) = \{ x \in \mathbb{R}^n : |x_i-(x_0)_i| <r \text{ for any } i\in\{ 1, \cdots,n\}\}$ and $Q_r = Q_r(0)$.

For $a \in \mathbb{R}$ and $i\in\{ 1, \cdots,n\}$, we write a hyperplane as $\{z_i = a\} := \{z\in\mathbb{R}^n : z_i=a\}$.
$S(n)$ denotes the space of symmetric $n \times n$ real matrices and $I_n$ denotes the $n \times n$ identity matrix.
For $a,b >0$, we write $a\approx b$ if there exists a constant $C>1$ such that $\frac{1}{C}b \leq a \leq Cb$.
For a function $f:\Omega \rightarrow \R$, we say $f^+ = \max\{f,0\}$ and $f^- = \max\{-f,0\}$.

For $a=(a_i) \in \mathbb{R}^n$, we always write the diagonal matrix with entries $a_i$ as
\begin{align*}
    \diag{a_i} := \operatorname{diag}(a_1,\cdots,a_n) \in S(n).
\end{align*}

We recall the definition of the inequalities \eqref{PDE} in the viscosity sense from \cite{Ishii92, Caffarelli95} as follows.
\begin{defn}
Let $f \in C(B_1)$.
We say that $u \in C(\overline{B}_1)$ satisfies 
$$\mathcal{M}^\pm_{\lambda,\Lambda} \left( \diag{|D_iu|^{p/2}} D^2u \diag{|D_iu|^{p/2}} \right) \pm\Lambda|Du|^{p+1} \leq f(x) \quad\text{in}\ B_{1} \quad (\text{resp.} \geq) $$
in the viscosity sense, if for any $x_0 \in B_1$ and any test function $\psi \in C^2(B_1)$ such that $u-\psi$ has a local minimum (resp. maximum) at $x_0$, then
$$\mathcal{M}^\pm_{\lambda,\Lambda} \left( \diag{|D_i\psi(x_0)|^{p/2}} D^2\psi(x_0) \diag{|D_i\psi(x_0)|^{p/2}} \right) \pm\Lambda|D\psi(x_0)|^{p+1} \leq f(x_0)  \quad (\text{resp.} \geq).$$ 
\end{defn}

We recall the definition and some properties of the Pucci operator (see \cite{Caffarelli95}).
\begin{defn}
    For given $0<\lambda \leq \Lambda$, we define the Pucci operators $\mathcal{M}_{\lambda,\Lambda}^{\pm} : S(n) \rightarrow \mathbb{R}$ as follows:
\begin{align*}
    \mathcal{M}_{\lambda,\Lambda}^{+}(M) := \lambda \sum_{e_i(M)<0}e_i(M) + \Lambda \sum_{e_i(M)>0}e_i(M), \\
    \mathcal{M}_{\lambda,\Lambda}^{-}(M) := \Lambda \sum_{e_i(M)<0}e_i(M) + \lambda \sum_{e_i(M)>0}e_i(M),
\end{align*}
where $e_i(M)$'s are the eigenvalues of $M$.
We also abbreviate $\mathcal{M}^{\pm}_{\lambda,\Lambda}$ as $\mathcal{M}^{\pm}$.
\end{defn}

\begin{prop} 
For any $M,N \in S(n)$, we have
    \begin{enumerate}
        \item $ \mathcal{M}_{\lambda,\Lambda}^{-}(M) +\mathcal{M}_{\lambda,\Lambda}^{-}(N)\leq \mathcal{M}_{\lambda,\Lambda}^{-}(M+N) \leq \mathcal{M}_{\lambda,\Lambda}^{-}(M) +\mathcal{M}_{\lambda,\Lambda}^{+}(N)$. \\
        \item $ \mathcal{M}_{\lambda,\Lambda}^{+}(M) +\mathcal{M}_{\lambda,\Lambda}^{-}(N)\leq \mathcal{M}_{\lambda,\Lambda}^{+}(M+N) \leq \mathcal{M}_{\lambda,\Lambda}^{+}(M) +\mathcal{M}_{\lambda,\Lambda}^{+}(N).$ 
    \end{enumerate}
\end{prop}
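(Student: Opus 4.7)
The plan is to base everything on the variational characterization
\[
\mathcal{M}^-_{\lambda,\Lambda}(M) = \inf_{A \in \mathcal{A}} \operatorname{tr}(AM), \qquad \mathcal{M}^+_{\lambda,\Lambda}(M) = \sup_{A \in \mathcal{A}} \operatorname{tr}(AM),
\]
where $\mathcal{A} = \{A \in S(n) : \lambda I_n \leq A \leq \Lambda I_n\}$. Once this identity is in place, each inequality in (1) and (2) reduces to an elementary inf/sup manipulation, and no further spectral work is needed.

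First I would verify the variational formula. Diagonalizing $M$ by an orthogonal matrix $U$ reduces the problem to the case where $M$ is diagonal, since the constraint $\lambda I_n \leq A \leq \Lambda I_n$ is preserved under conjugation by $U$ and the trace is invariant under that conjugation. For diagonal $M$, the value $\operatorname{tr}(AM)$ depends only on the diagonal of $A$, whose entries must lie in $[\lambda,\Lambda]$; choosing these diagonal entries optimally against the signs of the eigenvalues of $M$ recovers the definitions of $\mathcal{M}^\pm$.

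Next I would deduce the four inequalities. For the left half of (1), restricting two independent infima to a common admissible matrix gives
\[
\mathcal{M}^-(M)+\mathcal{M}^-(N) = \inf_{A \in \mathcal{A}}\operatorname{tr}(AM) + \inf_{B \in \mathcal{A}}\operatorname{tr}(BN) \leq \inf_{A \in \mathcal{A}}\operatorname{tr}(A(M+N)) = \mathcal{M}^-(M+N).
\]
For the right half, I would split $\operatorname{tr}(A(M+N)) = \operatorname{tr}(AM) + \operatorname{tr}(AN)$ and use $\operatorname{tr}(AN) \leq \mathcal{M}^+(N)$ for every admissible $A$; taking infimum over $A$ then yields $\mathcal{M}^-(M+N) \leq \mathcal{M}^-(M) + \mathcal{M}^+(N)$. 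The two inequalities in (2) follow by the symmetric argument, swapping inf and sup throughout.

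The only step requiring any care is the variational formula itself; in particular, one must justify that off-diagonal entries of $A$ (in the eigenbasis of $M$) cannot improve the extremum, which follows because the trace pairs only the diagonal of $A$ against the diagonal of $M$ in that basis. After that, the rest of the argument is purely formal.
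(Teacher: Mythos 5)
Your proof is correct: the variational identities $\mathcal{M}^-_{\lambda,\Lambda}(M)=\inf_{\lambda I_n\le A\le \Lambda I_n}\tr{AM}$ and $\mathcal{M}^+_{\lambda,\Lambda}(M)=\sup_{\lambda I_n\le A\le \Lambda I_n}\tr{AM}$ are justified properly (including the point that, in the eigenbasis of $M$, only the diagonal of $A$ matters and its entries range exactly over $[\lambda,\Lambda]$), and the four inequalities then follow by the inf/sup manipulations you describe. The paper does not prove this proposition but simply recalls it from \cite{Caffarelli95}, where the argument is precisely this extremal-operator characterization, so your route coincides with the standard one.
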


\begin{rmk} \label{scaling}
    (Scaling) Observe that if $u$ satisfies $\mathcal{M}^-_{\lambda,\Lambda} \left( \diag{|D_iu|^{p/2}} D^2u \diag{|D_iu|^{p/2}} \right) -\Lambda|Du|^{p+1} \leq f(x)$ in $B_1$, then the function $\tilde{u}(x)=\frac{u(rx)}{K}$
    satisfies the following inequalities:
    \begin{align*}
        \mathcal{M}^-_{\lambda,\Lambda} \left( \diag{|D_iv|^{p/2}} D^2v \diag{|D_iv|^{p/2}} \right) -r\Lambda|Du|^{p+1}  \leq \tilde{f}(x):=\frac{r^{p+2}}{M^{p+1}}f(x).
    \end{align*}
\end{rmk}

Finally, we write the Calder\'on-Zygmund cube decomposition (see  \cite{Caffarelli95}, Lemma 4.2).
We split the cube $Q_1$ into $2^n$ subcubes of half the side length and repeat this splitting on each new cube.
For any dyadic cube $Q$, we say the $\tilde{Q}$ is the predecessor of $Q$ if $Q$ is one of the $2^n$ cubes split from $\tilde{Q}$.
\begin{lem}[\cite{Caffarelli95}] \label{CZ}
    Let $E \subset F\subset Q_1$ be measurable sets and $0<\delta<1$ such that
    \begin{enumerate}
        \item $|E| \leq \delta$,
        \item If $Q$ is a dyadic cube such that $|E\cap Q| > \delta |Q|$, then $\tilde{Q}\subset F$.
    \end{enumerate}
    Then $|E| \leq \delta|F|$.
\end{lem}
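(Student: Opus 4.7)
My plan is to run the standard dyadic stopping-time (Calder\'on--Zygmund) argument on the cube $Q_1$. I dyadically subdivide $Q_1$ and declare a cube $Q$ to be a \emph{stopping cube} if $|E\cap Q|>\delta|Q|$ while its predecessor $\tilde Q$ satisfies $|E\cap\tilde Q|\leq\delta|\tilde Q|$. Since $|Q_1|=2^n\geq 1$, hypothesis (1) gives $|E\cap Q_1|=|E|\leq\delta\leq\delta|Q_1|$, so the procedure is well posed at the top level and produces a family $\{Q_k\}$ of pairwise disjoint stopping cubes. By construction each predecessor satisfies $|E\cap\tilde Q_k|\leq\delta|\tilde Q_k|$, and by hypothesis (2) we have $\tilde Q_k\subset F$ for every $k$.

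The first analytic ingredient I would invoke is the Lebesgue differentiation theorem for the dyadic basis applied to $\chi_E$: at almost every $x\in E$ the dyadic averages $|E\cap Q|/|Q|$ over dyadic cubes containing $x$ tend to $1$, hence eventually exceed $\delta$. This forces $x$ to lie in some stopping cube, so $E\subset\bigcup_k Q_k$ up to a set of measure zero.

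To conclude, I would organize the predecessors by selecting among $\{\tilde Q_k\}$ the elements that are maximal with respect to inclusion and calling them $\{P_j\}$. By the dyadic nesting property the $P_j$'s are pairwise disjoint; each $P_j$ equals some $\tilde Q_k$, so $|E\cap P_j|\leq\delta|P_j|$ and $P_j\subset F$; and every stopping cube $Q_k$ is contained in one of the $P_j$ (take any predecessor of $Q_k$ and replace it by a maximal one containing it). Combining these observations yields
\[
|E|=\sum_k|E\cap Q_k|\leq\sum_j|E\cap P_j|\leq\delta\sum_j|P_j|=\delta\Bigl|\bigsqcup_j P_j\Bigr|\leq\delta|F|,
\]
which is the desired estimate.

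The only delicate point is the passage to maximal predecessors: one must verify that this refined collection remains pairwise disjoint and still covers all stopping cubes. Both properties are immediate from the standard fact that two dyadic cubes are either disjoint or nested, so I do not anticipate any genuine obstacle; the argument is essentially self-contained modulo the dyadic Lebesgue differentiation theorem.
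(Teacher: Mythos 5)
Your proof is correct and is essentially the standard Calder\'on--Zygmund stopping-time argument from Caffarelli--Cabr\'e, which the paper simply cites without proof; your extra care with the normalization $|E|\leq\delta\leq\delta|Q_1|$ (since here $|Q_1|=2^n$) and the passage to maximal predecessors $P_j$ are exactly the right points to check. One cosmetic remark: under your literal definition of a stopping cube (only the immediate predecessor has density $\leq\delta$) the cubes $Q_k$ need not be pairwise disjoint, so either take the $Q_k$ maximal among dyadic cubes with $|E\cap Q|>\delta|Q|$ (which is what the procedure you describe actually produces, and then the predecessor condition follows), or replace the identity $|E|=\sum_k|E\cap Q_k|$ by the inequality $|E|\leq\sum_j|E\cap P_j|$, which follows directly from $E\subset\bigcup_j P_j$ up to a null set and the disjointness of the $P_j$.
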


\section{Proof of ABP estimates} \label{sec3}
In this section, we prove the Aleksandrov-Bakelman-Pucci estimates, Theorem \ref{Main1}.
We first introduce the notion of upper contact set of $u$.
For $u \in C(\Omega)$ and $r>0$ as
\begin{align} \label{upp}
\begin{split}
    \Gamma^+(u) = \{ y \in \Omega : \exists p \in \R^n \text{ such that } u(x) \leq u(y) + \langle{p,x-y} \rangle, \  \forall x \in \Omega \}, \\ 
    \Gamma^+_r(u) = \{ y \in \Omega : \exists p \in \overline{B_r} \text{ such that } u(x) \leq u(y) + \langle{p,x-y} \rangle, \  \forall x \in \Omega \}.
\end{split}
\end{align}

\begin{proof}[Proof of Theorem \ref{Main1}]
We only prove the first inequality since the other can be proved similarly.
We assume that $u \in C^2(\Omega) \cap C(\overline{\Omega})$ by using approximating based on the regularization process via the sup convolutions. (See \cite{Caffarelli96}.)
Let us define
\begin{align*}
    r_0 = \frac{\sup_{\Omega}u - \sup_{\partial \Omega}u^+}{d}.
\end{align*}
Then using the same method in \cite{Argiolas11}, for $r<r_0$, we obtain
\begin{align*}
    \Gamma^+_r(u) \subset\subset \Omega, \quad \text{ and } \quad  Q_{r/\sqrt{n}}\subset B_r \subset Du(\Gamma^+_r(u)).
\end{align*}
We also consider the map $G : \R^n \rightarrow \R^n$ as
\begin{align*}
    G(z)=(|z_i|^{p}z_i).
\end{align*}
By direct calculation, we get $DG(z) = (p+1)\diag{|z_i|^{p}}$.
Moreover, we have $G(Q_{r/\sqrt{n}})=Q_{(r/\sqrt{n})^{p+1}} \subset G(Du)(\Gamma^+_r(u))$.
Therefore, applying the area formula to Lipschitz map $G(Du)$, we obtain
\begin{align} \label{areafor}
    \int_{Q_{(r/\sqrt{n})^{p+1}}} h(p) \, dp &\leq \int_{\Gamma^+_r(u)} |\det (DG(Du) \cdot D^2u)| h(G(Du)) \, dx.
\end{align}
for some function $h : \R^n \rightarrow [0,\infty)$.
Since $D^2u \leq 0$ on $\Gamma^+(u)$, we get $\diag{|D_iu|^{p/2}}D^2u\diag{|D_iu|^{p/2}} \leq0$ on $\Gamma^+(u)$.
Thus, we obtain
\begin{align*}
    \mathcal{M}^+_{\lambda,\Lambda}  \left( \diag{|D_iu|^{p/2}} D^2u \diag{|D_iu|^{p/2}} \right)  =\lambda\tr{\diag{|D_iu|^{p/2}} D^2u \diag{|D_iu|^{p/2}}} \quad \text{ on } \Gamma^+(u).
\end{align*}
Using that $u$ satisfies \eqref{supsol}, we have
\begin{align*}
    \tr{-\diag{|D_iu|^{p/2}} D^2u \diag{|D_iu|^{p/2}}} \leq \frac{1}{\lambda}\left(f^- + \Lambda|Du|^{p+1} \right)\quad \text{ on } \Gamma^+(u).
\end{align*}
Recalling that $\det(A) \leq \left(\frac{\tr{A}}{n}\right)^n$ for positive definite matrix $A$, we get
\begin{align*}
    |\det (DG(Du) \cdot D^2u)| &=(p+1)^n|\det(\diag{|D_iu|^{p}}D^2u)|\\
    &=(p+1)^n|\det(\diag{|D_iu|^{p/2}}D^2u\diag{|D_iu|^{p/2}})| \\
    &\leq\left|(p+1)\frac{\tr{-\diag{|D_iu|^{p/2}}D^2u\diag{|D_iu|^{p/2}}}}{n}\right|^n \\
    &\leq \left|\frac{p+1}{n\lambda}(f^- + \Lambda|Du|^{p+1}) \right|^n.
\end{align*}
Thus, choosing $h(z) = \frac{1}{\kappa^n+|z|^n}$ for some $\kappa>0$, we obtain

\begin{align} \label{ABP1}
    \int_{\Gamma^+_r(u)} |\det (DG(Du) \cdot D^2u)| h(G(Du)) \, dx &\leq C \int_{\Gamma^+_r(u)}\frac{(f^-)^n + |Du|^{n(p+1)}}{\kappa+|(|D_iu|^pD_iu)|^n} \, dx  \nonumber\\
    &\leq C \int_{\Gamma^+_r(u)}\frac{(f^-)^n}{\kappa^n}+ C \, dx.
\end{align}
for some $C=C(n,p)>0$.
Here we used that $|z|^{p+1} \approx \sum|z_i|^{p+1}$ for any $z \in \R^n$ depending on $n$ and $p$.
Moreover, by direct calculation, we have
\begin{align} \label{ABP2}
    \int_{Q_{(r/\sqrt{n})^{p+1}}} h(z) \, dz \geq \int_{B_{(r/\sqrt{n})^{p+1}}} \frac{1}{\kappa^n+|z|^n} \, dz =n \omega_n \ln \left( 1+\frac{r^{n(p+1)}}{\kappa^n n^{n(p+1)/2}}\right)
\end{align}
We select $\kappa =\norm{f^-}_{L^n(\Gamma^+(u))} +\epsilon$ and combine \eqref{ABP1} and \eqref{ABP2} to \eqref{areafor} with $\epsilon \rightarrow 0$.
Then, we obtain
$r \leq C\norm{f^-}_{L^n(\Gamma^+(u))}^{\frac{1}{1+p}}$ for some $C=C(n,p,\lambda,\Lambda,d)>0$, which concludes the proof.
\end{proof}

\section{Basic measure estimate} \label{sec4}
We focus on the proof of Harnack inequality, Theorem \ref{Main2}.
In this section, we prove the following measure estimate lemma.

\begin{lem} \label{measlem}
    There exist $\delta<1$, $\mu<1$ and $M_1>1$ depending on $n,\lambda, \Lambda$ and $p \geq 0$ such that if $u \in C(Q_1)$ satisfies
    \begin{align*}
    \begin{cases}
        u \geq 0 \text{ in } Q_1,\\
        \mathcal{M}^-\left( \diag{|D_iu|^{p/2}} D^2u \diag{|D_iu|^{p/2}} \right)  -\Lambda|Du|^{p+1}\leq f(x) \text{ in } Q_1, \\
        \norm{f}_{L^n(Q_1)} \leq \mu\\
        |\{u>M_1\} \cap Q_1| \geq (1-\delta)|Q_1|,
    \end{cases}
    \end{align*}
    then $u>1$ in $Q_{1/4n}$.
\end{lem}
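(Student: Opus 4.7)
The argument proceeds by contradiction using the sliding paraboloid method with the anisotropic concave function
\[
    \varphi(x) = -\frac{1+p}{2+p}\sum_{i=1}^{n}|x_i|^{1+\frac{1}{1+p}}
\]
as a substitute for the standard paraboloid $-|x|^2/2$. Assume there exists $x_0 \in Q_{1/4n}$ with $u(x_0) \leq 1$. For each vertex $y$ in a suitably chosen open set $V$ (a small cube near $x_0$, chosen so that $|V| \geq c_0 = c_0(n,p) > 0$), let $c_y$ be the largest constant such that $\psi_y(z) := c_y + \varphi(z-y) \leq u(z)$ on $\overline{Q_1}$. After a fixed rescaling of $\varphi$ absorbed into dimensional constants, the assumption $u(x_0)\leq 1$ together with $u \geq 0$ on $\partial Q_1$ forces the touching point $t(y)$ to lie in the interior of $Q_1$ and $u(t(y)) \leq c_y \leq M_0$ for some $M_0 = M_0(n,p)$; hence the touching set $A := \{t(y): y \in V\}$ lies inside $\{u \leq M_0\} \cap Q_1$. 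Choosing $M > M_0$, the hypothesis then gives $|A| \leq \delta|Q_1|$.

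At each smooth touching point $x = t(y)$ (i.e., with $x_i \ne y_i$ for all $i$), viscosity testing yields $Du(x) = D\varphi(x-y)$---so coordinatewise $y_i = x_i + |D_iu|^pD_iu$---and $D^2u(x) \geq D^2\varphi(x-y)$. Since $\diag{|D_iu(x)|^{p/2}} = \diag{|D_i\varphi(x-y)|^{p/2}}$ at touching, conjugating and using the defining identity of $\varphi$ gives
\[
    A(x) := \diag{|D_iu|^{p/2}} D^2u \diag{|D_iu|^{p/2}} \geq -\tfrac{1}{1+p}\, I_n.
\]
Plugging this lower bound into $\mathcal{M}^{-}(A) - \Lambda|Du|^{p+1} \leq f$, and using that $|Du(x)|$ is a priori bounded at touching points (because $|D_iu|^{p+1} = |x_i - y_i|$ and $x \in Q_1$, $y \in V$), yields $\tr{A(x)} \leq C(n,\lambda,\Lambda,p)(1 + |f(x)|)$. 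Since $I + (p+1)A(x) \geq 0$ by the Hessian lower bound, AM-GM gives
\[
    |\det D_x y(x)| = \det\!\bigl(I + (p+1)A(x)\bigr) \leq \bigl(1 + \tfrac{p+1}{n}\tr{A(x)}\bigr)^n \leq C\bigl(1 + |f(x)|^n\bigr).
\]
Applying the area formula to $x \mapsto y(x) := (x_i + |D_iu|^p D_iu(x))_i$ on $A$ then gives
\[
    |V| \leq \int_A |\det D_x y(x)|\,dx \leq C\bigl(|A| + \norm{f}_{L^n(Q_1)}^n\bigr) \leq C\bigl(\delta|Q_1| + \mu^n\bigr),
\]
contradicting $|V| \geq c_0 > 0$ once $\delta$ and $\mu$ are chosen small enough.

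The main obstacle is that $\varphi \notin C^2$ on the coordinate hyperplanes $\bigcup_i\{z_i=0\}$: if at a touching point $x$ we have $D_iu(x)=0$ (equivalently $x_i=y_i$) for some indices $i \in S \subsetneq \{1,\dots,n\}$, then $D_{ii}\varphi(x-y)$ blows up in the $S$-directions, invalidating the pointwise Hessian comparison and the Jacobian computation above. To remedy this, I would stratify the touching set by the degeneracy pattern $S$ and, on each stratum, restrict the entire argument to the $(n-|S|)$-dimensional affine slice $H := \{z\in\mathbb{R}^n : z_i = y_i \text{ for } i \in S\}$. On $H$ the restricted $\varphi$ is smooth at $x$ (all remaining coordinates of $x-y$ are nonzero); the restricted vertex map is well defined because the $S$-coordinates are frozen; and, crucially, the coordinatewise structure of the operator causes the $n$-dimensional inequality, evaluated at a point with $D_iu=0$ for $i\in S$, to reduce to a pseudo-$p$-Laplacian inequality of the exact same form in $n-|S|$ variables for the restricted solution. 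Hence the Jacobian bound and the area formula apply verbatim on each slice, and Fubini's theorem integrates the slice-wise measure estimates into the desired global bound, restoring the contradiction.
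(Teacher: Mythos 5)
Your proposal follows essentially the same route as the paper's proof: the same anisotropic paraboloid, touching points confined to the small sublevel set, stratification of the touching set by the degeneracy pattern, dimension reduction to the slices on which the operator keeps its form, Fubini, and the area-formula Jacobian bound via the trace and the equation; the only cosmetic difference is that you slide from a fixed small cube of vertices near $x_0$ rather than from $V=\{u>M\}\cap Q_{1/4n}$, which produces the same contradiction (and also makes the fully degenerate stratum $x=y$ possible, though there the vertex map is the identity and the bound is trivial). Just note that to justify $Du(x)=D\varphi(x-y)$, $D^2u(x)\ge D^2\varphi(x-y)$ pointwise and the Lipschitz continuity of the vertex map needed for the area formula, the paper first replaces $u$ by its inf-convolution (semiconcavity, a.e.\ twice differentiability) and works on the truncated strata $\{|x_i-y_i|>\epsilon,\ i\in I\}$ before letting $\epsilon\to0$ --- technical steps your sketch leaves implicit.
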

\begin{proof}
    By considering the inf convolution of $u$,
    \begin{align*}
        u_\epsilon(x)=\inf_{y\in \overline{Q_1}}\left(u(y)+\frac{|x-y|^2}{\epsilon}\right),
    \end{align*}
    we may assume that $u$ is semiconcave and twice differentiable almost everywhere (See \cite{Caffarelli95, Imbert16}). 
    We argue by a contradiction assuming there exists a point $x_0 \in Q_{1/4n}$ such that $u(x_0) \leq 1$.

    We define the vertex set $V = \{u>M \} \cap Q_{1/4n}$ for some $M>1$ to be chosen later, and slide the paraboloid with vertex in $V$ from below until it first touches the graph of $u$.
    Our choice of the `paraboloid' is 
    \begin{align*}
        \varphi(x) = -K\frac{1+p}{2+p} \left(|x_1|^{1+\frac{1}{1+p}} + \cdots +|x_n|^{1+\frac{1}{1+p}} \right)
    \end{align*}
    for some $K>1$ to be determined later.
    Note that $\varphi$ is $C^1$, but not $C^2$ in $n$ hyperplanes $\{z_i=0\}$ for $i\in\{1,\cdots,n\}$.
    Explicitly, we define the touching point set as
    \begin{align*}
        T=\{x\in\overline{Q_1}: \exists y \in V \text{ such that } u(x) - \varphi(x-y) = \min_{z\in \overline{Q_1}} u(z)-\varphi(z-y)  \}.
    \end{align*}
    Note that for $y \in V \subset Q_{1/4n}$ and $z \in \partial Q_1$, we have $u(z)-\varphi(z-y) \geq K\frac{1+p}{2+p}\left(1-\frac{1}{4n}\right)^{2}$ since $u \geq 0$.
    On the other hand, we obtain $u(x_0)-\varphi(x_0-y) \leq 1+K\frac{1+p}{2+p}n\left(\frac{1}{2n}\right)<K\frac{1+p}{2+p}\left(1-\frac{1}{4n}\right)^{2} =:M_1$ by choosing a large $K>1$.
    Hence, the minimum cannot be attained on the boundary $\partial Q_1$ and thus $T \subset Q_1$.
    Moreover, it follows that $T \subset \{  u <M_1\} \cap Q_1$.

    By the differentiability of $u$, for each $x\in T$ there is a unique vertex point $y\in V$ such that $\varphi(z-y)$ touches $u$ from below at $x$.
    We therefore define the mapping $m:T \rightarrow V$ which associates each touching point $x$ with its corresponding vertex $y$.
    Moreover, we get
    \begin{align} \label{grad}
        D_iu(x)= D_i\varphi(x-y) = -K|x_i-y_i|^{\frac{1}{1+p}} \frac{x_i-y_i}{|x_i-y_i|}, \\
        D^2u(x) \geq D^2\varphi(x-y) = -\frac{K}{1+p}\diag{|x_i-y_i|^{-\frac{p}{p+1}}}. \label{hess}
    \end{align}
    Observe that $D^2\varphi(x-y)$ blows up when $x-y \in \bigcup\{z_i=0\}$.
    By \eqref{grad} we can write the map $m$ explicitly as
    \begin{align} \label{vt}
        y_i =x_i + \frac{1}{K^{1+p}}|D_iu|^{p}D_iu(x). 
    \end{align}
    We want to prove that $Du$ is Lipschitz in $T$ in order to apply the area formula to the map $m:x\mapsto y$ as in \cite{Imbert16}.
    However, at critical points where  $|x_i-y_i|=0$ for some $i$, the Hessian $D^2\varphi(x-y)$ blows up, making it difficult to prove the Lipschitz continuity and the inequality \eqref{hess} also cannot be used. 
    In order to deal with this difficulty, we decompose the touching point set according to whether each coordinate direction is degenerate or non-degenerate. 
    For $I \subset \{1,\cdots,n\}$ and $\epsilon>0$, we define
    \begin{align}  \label{defT}
    \begin{split}
        T^{\epsilon}_{I} &= \{ x \in T : |x_i-y_i| >\epsilon \ \text{ for }\ i\in I, \ \ |x_j-y_j|=0 \ \text{ for }\ j\notin I\},\\
        T_{I} &= \{ x \in T : |x_i-y_i| >0 \ \text{ for }\ i\in I, \ \ |x_j-y_j|=0 \ \text{ for }\ j\notin I\},
    \end{split}
    \end{align}
    where $y=m(x)$ and define $V^{\epsilon}_{I} = m(T^{\epsilon}_{I})$ and $V_{I}=m(T_I)$.
    Then for $0<\epsilon_1 < \epsilon_2$, we have
    \begin{align} \label{TV}
    \begin{split}
       T^{\epsilon_2}_I \subset T^{\epsilon_1}_I, \quad T_I = \bigcup_{\epsilon>0} T^\epsilon_I, \quad T=\bigsqcup_{I \subset \{1,\cdots,n\}} T_I,\\
       V^{\epsilon_2}_I \subset V^{\epsilon_1}_I, \quad V_I = \bigcup_{\epsilon>0} V^\epsilon_I, \quad V=\bigcup_{I \subset \{1,\cdots,n\}} V_I.
    \end{split}
    \end{align}
    Our goal is to prove $|V_I| \leq C(|T_I|+\mu^n)$ for each $I \subset \{1,\cdots,n\}$ by applying the area formula.

    \textbf{ Case 1 : $I = \{1,\cdots,n\}$.}
    For simplicity, we denote $T_{\{1,\cdots,n\}} = T_n$ and $V_{\{1,\cdots,n\}} = V_n$.
    Let $\epsilon>0$ be a small number.
    For $x \in T_n^\epsilon$ and $y=m(x)$, there exists a paraboloid $\varphi(z-y)+C_0$ that touches $u$ from below at $z=x$.
    Moreover, since $|x_i-y_i| >\epsilon$ for every $i \in \{1,\cdots,n\}$, $\varphi(z-y)$ is smooth at $z=x$ with $|D^2\varphi(x-y)| \leq C\epsilon^{-2}$.
    Therefore, for any $z \in Q_1$, we get
    \begin{align}
        u(z) &\geq \varphi(z-y) +C_0 \nonumber \\
         &\geq \varphi(x-y) + C_0+ \langle D\varphi(x-y),z-x\rangle - C\epsilon^{-2}|z-x|^2 \nonumber \\
         &=u(x) + \langle Du(x),z-x\rangle - C\epsilon^{-2}|z-x|^2, \label{touchbelow}
    \end{align}
    by the Taylor expansion of $\varphi(\cdot-y)$ at $x$, the fact that $u(x)=\varphi(x-y)+C_0$, and \eqref{grad}.

    Let $x_1, x_2 \in T_n^\epsilon $ and set $y_1 =m(x_1)$ and $ y_2=m(x_2)$.
    For any $z \in B_r(x_1)$ with $r=2|x_1-x_2|$, we have
    \begin{align*}
    u(z) & \geq u(x_1) + \langle Du(x_1),z-x_1 \rangle - C\epsilon^{-2}|z-x_1|^2 \\
    & \geq u(x_2) + \langle Du(x_2),x_1-x_2 \rangle - C\epsilon^{-2}|x_1-x_2|^2 + \langle Du(x_1),z-x_1 \rangle - C\epsilon^{-2}|z-x_1|^2,
    \end{align*}
    by using $\eqref{touchbelow}$ twice.
    By the semiconcavity of $u$, there exists $\tilde{C}>0$ such that
    \begin{align*}
        u(z) \leq u(x_2) + \langle Du(x_2),z-x_2 \rangle +\tilde{C}|z-x_2|^2.
    \end{align*}
    Therefore, using the two inequalities above, we get
    \begin{align*}
        \langle Du(x_1)-Du(x_2),z-x_1 \rangle \leq (C\epsilon^{-2}+\tilde{C})r^2.
    \end{align*}
    Choosing $z$ such that $z-x_1$ is parallel with $Du(x_1)-Du(x_2)$, we obtain
    \begin{align*}
        |Du(x_1)-Du(x_2)| \leq (C\epsilon^{-2}+\tilde{C})|x_1-x_2|.
    \end{align*}
    Thus, we conclude that $Du$ and $m$ are Lipschitz in $T^\epsilon_n$.
    By the area formula on $m:T^\epsilon_n \rightarrow V^\epsilon_n$, we have
    \begin{align} \label{area1}
        |V_n^\epsilon| = \int_{T^\epsilon_n} |\det D_xy| \ dx.
    \end{align}
    We now differentiate \eqref{vt} with respect to $x$ to get
    \begin{align*}
        D_xy = I_n + \frac{1+p}{K^{1+p}}\diag{|D_iu|^{p}}D^2u.
    \end{align*}
    By using $\det (I_n+MN)=\det(I_n+NM)$ for $n\times n$ matrices $M$ and $N$, we have
    \begin{align} \label{det}
        \det D_xy &= \det \left(I_n + \frac{1+p}{K^{1+p}}\diag{|D_iu|^{p}}D^2u \right) \\
        &= \det \left(I_n + \frac{1+p}{K^{1+p}}\diag{|D_iu|^{p/2}}D^2u\diag{|D_iu|^{p/2}}\right) :=\det(A).
    \end{align}
    Note that by direct calculation,
    \begin{align*}
        -\frac{1+p}{K^{1+p}}\diag{|D_i\varphi|^{p/2}}D^2\varphi \diag{|D_i\varphi|^{p/2}}(x-y) =I_n.
    \end{align*}
    Recalling that $Du(x)=D\varphi(x-y)$ and $D^2u(x) \geq D^2\varphi(x-y)$, it follows from equation above that
    \begin{align*}
        A &=I_n + \frac{1+p}{K^{1+p}}\diag{|D_iu|^{p/2}}D^2u \diag{|D_iu|^{p/2}} \\
        &= \frac{1+p}{K^{1+p}}\diag{|D_iu|^{p/2}} (D^2u(x)-D^2\varphi(x-y)) \diag{|D_iu|^{p/2}} \geq 0,
    \end{align*}
    thus $A$ is a nonnegative definite matrix.
    Note that since $|x_i-y_i|>\epsilon$ for any $i \in \{1,\cdots,n\}$, $D^2\varphi(x-y)$ is well defined.
    Therefore, we obtain
    \begin{align*}
        \lambda \tr{A} &= \mathcal{M}^{-}(A) \\
        & \leq \mathcal{M}^{+}(I_n) + \frac{1+p}{K^{1+p}}\mathcal{M}^{-}\left(\diag{|D_iu|^{p/2}}D^2u \diag{|D_iu|^{p/2}}\right) \\
        & \leq n\Lambda +\frac{1+p}{K^{1+p}}(f(x) +|Du|^{p+1}) \\
        & \leq C(1 +f(x)),
    \end{align*}
    where we have used the fact that $|Du(x)|=|D\varphi(x-y)| \leq CK$.
    Thus, we get
    \begin{align*}
        \det D_xy=\det(A) &\leq \left(\frac{\tr{A} }{n}\right)^n \leq C(1 +|f(x)|)^n.
    \end{align*}
    Using \eqref{area1}, we have
    \begin{align*}
        |V_n^\epsilon| = \int_{T^\epsilon_n} |\det D_xy| \ dx \leq \int_{T^\epsilon_n} C(1 +f(x))^n \ dx \leq C(|T^\epsilon_n|+\mu^n).
    \end{align*}
    By letting $\epsilon \rightarrow0$, we have $|V_n| = \lim_{\epsilon\rightarrow0}|V_n^\epsilon| \leq \lim_{\epsilon\rightarrow0}C(|T^\epsilon_n|+\mu^n) \leq C(|T_n|+\mu^n)$.

    \textbf{ Case 2 : $I = \emptyset$.} Note that if $x \in T_\emptyset$, then the corresponding vertex $y \in V_\emptyset$ becomes $y=x$.
    Thus we have $V_\emptyset = T_\emptyset$.
    However, since $V \subset \{u>M\}$ and $T \subset\{u<M\}$, $V_\emptyset = T_\emptyset =\emptyset$.

    \textbf{ Case 3 : $\emptyset\subsetneq I \subsetneq \{1,\cdots,n\}$.} 
    Let $k=|I|$ and $J = \{1,\cdots,n\} \setminus I$.
    Note that by \eqref{vt}, we have $D_ju(x)=0$ for $j \in J$ and $x \in T^\epsilon_I$, so that the equation degenerates in $j$-th direction.
    Let $\epsilon>0$ be small, and let $(a_j)_{j\in J} \in Q^{n-k}_1$ be $(n-k)$ numbers satisfying $a_j \in (-1,1)$.
    We define the $k$-dimensional slice 
    $H(a_j) = \bigcap_{j \in J}\{z_j=a_j\}$, and restrict the function $u, \varphi$ and $m$ to $H(a_j) \cap Q_1^n $ and denote them by $u_{(a_j)}, \varphi_{(a_j)}$ and $m_{(a_j)}$, respectively.
    For notational convenience, for $z \in H(a_j)\cap Q_1^n$, we regard $z$ as an element of $Q_1^k$ by simply ignoring the fixed coordinates $a_j$ with $j \in J$.
    In this way, the restricted functions can be considered as functions defined on the $k$-dimensional cube $Q_1^{k}$.
    We also define $T^{\epsilon}_{I}(a_j) = H(a_j) \cap T^\epsilon_I $ and $V^{\epsilon}_{I}(a_j) =H(a_j) \cap V^\epsilon_I$.
    Then if $x \in T^\epsilon_I(a_j)$, then $y=m(x)$ satisfies $x_j=y_j=a_j$ for any $j \in J$ so that $y \in V^{\epsilon}_{I}(a_j)$.
    Therefore, the restricted map $m_{(a_j)} : T^{\epsilon}_{I}(a_j) \rightarrow V^{\epsilon}_{I}(a_j)$ is well defined with
    \begin{align} \label{vt2}
        y_i =x_i + \frac{1}{K^{1+p}}|D_iu_{(a_j)}|^{p}D_iu_{(a_j)}(x).
    \end{align}
    We next claim that $Du_{(a_j)}$ is Lipschitz in $T^{\epsilon}_{I}(a_j)$.
    Let $x \in T^{\epsilon}_{I}(a_j)$, $y=m(x) \in V^{\epsilon}_{I}(a_j)$ and  $\varphi(z-y)+C_0$ be a paraboloid touching $u$ from below at $z=x$.
    Then the restricted paraboloid $\varphi_{(a_j)}(z-y)+C_0$ also touches $u_{(a_j)}$ below at $z=x$, and so
    \begin{align*}
        Du_{(a_j)}(x)= D\varphi_{(a_j)}(x-y) \in \mathbb{R}^{k}, \quad D^2u_{(a_j)}(x) \geq D^2\varphi_{(a_j)}(x-y)\in S(k).
    \end{align*}
    Recall that, in order to prove Lipschitz continuity of $Du$ in $T^\epsilon$ as in \textbf{Case 1}, we only require semiconcavity of $u$ and $C^2$ regularity of $\varphi(z-y)$ at $z=x$.
    Since $u$ is semiconcave, its restriction $u_{(a_j)}$ is also semiconcave.
    Moreover, the restriction of $\varphi$ satisfies
    \begin{align*}
        \varphi_{(a_j)}(z-y) = -K\frac{p+1}{p+2}\sum_{i\in I} |z_i-y_i|^{1+\frac{1}{1+p}},
    \end{align*}
    because, for $z \in H(a_j)$, we have $z_j=y_j=a_j$ for all $j \in J$.
    Since $|x_i-y_i| >\epsilon$ for $i\in I$, the restricted paraboloid $\varphi_{(a_j)}(z-y)$ is smooth at $z=x$ with $|D^2\varphi_{(a_j)}(x-y)| \leq C\epsilon^{-2}$.
    Therefore, we have
    \begin{align*}
        u_{(a_j)}(z) &\geq \varphi_{(a_j)}(z-y) +C \nonumber \\
         &\geq \varphi_{(a_j)}(x-y) + C+ \langle D\varphi_{(a_j)}(x-y),z-x\rangle - C\epsilon^{-2}|z-x|^2 \nonumber \\
         &=u_{(a_j)}(x) + \langle Du_{(a_j)}(x),z-x\rangle - C\epsilon^{-2}|z-x|^2, 
    \end{align*}
    for any $z \in Q^{k}_1$.
    By the same argument as in \textbf{Case 1}, we conclude that $Du_{(a_j)}$ and $m_{(a_j)}$ are Lipschitz in $T^{\epsilon}_{I}(a_j)$.
    Applying the area formula to the mapping $m_{(a_j)}:T^\epsilon_I(a_j) \rightarrow V^\epsilon_I(a_j)$, we obtain
    \begin{align} \label{area2}
        |V_I^\epsilon(a_j)| = \int_{T^\epsilon_I(a_j)} |\det D_xy| \ dx.
    \end{align}
    We differentiate \eqref{vt2} with respect to $x$ to get
    \begin{align*}
        D_xy = I_{k} + \frac{1+p}{K^{1+p}}\diag{|D_iu_{(a_j)}|^{p}}D^2u_{(a_j)}.
    \end{align*}
    Note that we have
    \begin{align*}
        -\frac{1+p}{K^{1+p}}\diag{|D_i\varphi_{(a_j)}|^{p/2}}D^2\varphi_{(a_j)} \diag{|D_i\varphi_{(a_j)}|^{p/2}}(x-y) =I_{k}.
    \end{align*}
    By the same argument as in \textbf{Case 1},
    \begin{align*}
        A &=I_{k} + \frac{1+p}{K^{1+p}}\diag{|D_iu_{(a_j)}|^{p/2}}D^2u _{(a_j)}\diag{|D_iu_{(a_j)}|^{p/2}} \\
        &= \frac{1+p}{K^{1+p}}\diag{|D_iu_{(a_j)}|^{p/2}} (D^2u_{(a_j)}(x)-D^2\varphi_{(a_j)}(x-y)) \diag{|D_iu_{(a_j)}|^{p/2}} \geq 0,
    \end{align*}
    with $\det(D_xy)=\det(A)$.
    Recall that $D^2\varphi_{(a_j)}(x-y)$ is well defined.
    Therefore, we have
    \begin{align*}
        \lambda \tr{A} &= \mathcal{M}^{-}(A) \\
        & \leq \mathcal{M}^{+}(I_{k}) + \frac{1+p}{K^{1+p}}\mathcal{M}^{-}\left(\diag{|D_iu_{(a_j)}|^{p/2}}D^2u_{(a_j)}\diag{|D_iu_{(a_j)}|^{p/2}} \right).
    \end{align*}
    Observe that for any $M \in S(k)$,
    \begin{align*}
        \sigma  \begin{pmatrix}
             M & 0 \\
             0 & 0_{n-k}
         \end{pmatrix} = \sigma(M) \cup \{\underbrace{0,\cdots,0}_{n-k}\},
    \end{align*}
    where $0_{n-k} \in S(n-k)$ denotes the zero matrix, and the spectrum $\sigma(M)$ is the multiset of eigenvalues of $M$.
    Moreover, simultaneous permutations of rows and columns do not change the spectrum.
    Therefore, since $|D_ju(x)|=0$ for any $j \in J$ and $|Du_{(a_j)}(x)| = |D\varphi_{(a_j)}(x-y)| \leq C $, we have
    \begin{align} \label{IPDE}
    \mathcal{M}^{-}\left(\diag{|D_iu_{(a_j)}|^{p/2}}D^2u_{(a_j)}\diag{|D_iu_{(a_j)}|^{p/2}}\right) -\Lambda|Du_{(a_j)}|^{p+1}  \nonumber \\= \mathcal{M}^{-}\left(\diag{|D_iu|^{p/2}}D^2u \diag{|D_iu|^{p/2}}\right)-\Lambda|Du|^{p+1} \leq f(x),
    \end{align}
    for $x \in T_I^\epsilon$. Hence, we obtain
    \begin{align*}
        \det(D_xy) \leq \left(\frac{\tr{A}}{k}\right)^{k} \leq C(1+|f_{(a_j)}(x)|)^{k},
    \end{align*}
    where $f_{(a_j)}$ is the restriction of $f$ on $H(a_j) \cap Q_1^n$.
    Therefore, by Fubini's theorem and \eqref{area2}, we get
    \begin{align*}
        |V^\epsilon_I|&= \int_{Q_1^{n-k}}|V_I^\epsilon(a_j)| \  da_J  \\
        &= \int_{Q_1^{n-k}}\int_{T^\epsilon_I(a_j)} |\det D_xy| \ dx_I da_J \\
        & \leq \int_{Q_1^{n-k}}\int_{T^\epsilon_I(a_j)} C(1+f_{(a_j)}(x))^{k} \ dx_I da_J \\
        & = \int_{T^\epsilon_I} C(1+f(x))^{k} \ dx \\
        &\leq C(|T^\epsilon_I|+\norm{f}_{L^{k}(Q^n_1)}^{k}) + \leq C(|T^\epsilon_I|+\mu^n).
    \end{align*}
    By letting $\epsilon \rightarrow0$, we have $|V_I| = \lim_{\epsilon\rightarrow0}|V_I^\epsilon| \leq \lim_{\epsilon\rightarrow0}C(|T^\epsilon_I|+\mu^n) \leq C(|T_I|+\mu^n)$.

    \textbf{Conclusion.}
    Therefore, we have $|V_I| \leq C(|T_I|+\mu^n)$ for every $I \subset \{1,\cdots,n\}$.
    Using \eqref{TV}, we have
    \begin{align*}
        |V| \leq \sum_{I \subset \{1,\cdots,n\}} |V^\epsilon_I| \leq C\sum_{I \subset \{1,\cdots,n\}} (|T_I|+\mu^n) = C(|T|+2^n\mu^n).
    \end{align*}
    Moreover, since $V = \{u>M_1\} \cap Q_{1/4n} $ and $T \subset \{ u <M_1\}$, we have
    \begin{align*}
         (1-4^n\delta)|Q_{1/4n}|\leq |V| \leq C(|T|+\mu^n) \leq C(\delta|Q_1|+ \mu^n).
    \end{align*}
    This yields a contradiction if we choose $\delta, \mu>0$ sufficiently small, depending only on $n , p,\lambda, \Lambda$.
    This completes the proof.
    
\end{proof}

\section{barrier function and doubling property} \label{sec5}
In this section, we prove the doubling property
by using a barrier function.
For brevity, for $x =(x_i) \in \mathbb{R}^n$ and $b >0$, we introduce the notation
\begin{align*}
    |x|_b := \sum_{i=1}^n|x_i|^b \in \mathbb{R}, \quad  x ^b:=(|x_i|^{b-1}x_i)  \in \mathbb{R}^n. 
\end{align*}
This convention will be used throughout this section.
Note that the $1/b$-power of $|x|_b$ is the standard $b$-norm.
Observe that $D(|x|_b) = bx^{b-1}$ and $D(x^b) = b\diag{|x_i|^{b-1}}$.
The barrier function is defined by
\begin{align*}
    \Phi_0(x)= \frac{1}{ab}|x|_b^{-a}  = \frac{1}{ab}\left(|x_1|^b +\cdots+|x_n|^b \right)^{-a},
\end{align*}
where $a>1$ is large enough and $b=1+\frac{1}{1+p}$.
The choice of $b$ is related to the `paraboloid' $\varphi(x)$ introduced in Section \ref{sec3}.
Observe that $\Phi_0(z)$ belongs to $C^1( \mathbb{R}^n \setminus \{0\})$ but is not $C^2$ on the set $\bigcup\{z_i=0\}$.
By direct calculation, for $x \in Q_{6n} \setminus Q_{1/8n}$, we have
\begin{align*}
    D\Phi_0(x)&= -|x|_b^{-(a+1)} x^{b-1},\\
    D^2\Phi_0(x) &= b(a+1)|x|_b^{-(a+2)}(x^{b-1} \otimes x^{b-1}) -(b-1)|x|_b^{-(a+1)} \diag{|x_i|^{b-2}}.
\end{align*}
Observe that $D^2\Phi_0(x)$ blows up when $x \in \bigcup\{z_i=0\}$ since $b-2 \leq 0$.
Thus, we get
\begin{align*}
         \diag{|D_i\Phi_0|^{p/2}}D^2\Phi_0\diag{|D_i\Phi_0|^{p/2}}(x) &= |x|_{b}^{-(a+1)(p+1)} \left( b(a+1)\frac{x^{(b-1)(1+p/2)}}{|x|_b^{1/2}} \otimes  \frac{x^{(b-1)(1+p/2)}}{|x|_b^{1/2}}\right. \\
         & \ \ - \left. (b-1) \diag{|x_i|^{b-2 + (b-1)p}}\right) \\
         &=|x|_{b}^{-(a+1)(p+1)} \left( b(a+1)\frac{x^{b/2}}{|x|_b^{1/2}} \otimes  \frac{x^{b/2}}{|x|_b^{1/2}} - (b-1) I_n\right).
\end{align*}
Therefore, since $ \left| \frac{x^{b/2}}{|x|_b^{1/2}}\right|_2 \geq c(b,n)$ and $|D\Phi_0| \leq C(b,n)|x|_b^{-(a+1)}$ for some $c(b,n),C(b,n)>0$, we have
\begin{align} 
    \mathcal{M}^-&\left(\diag{|D_i\Phi_0|^{p/2}}D^2\Phi_0\diag{|D_i\Phi_0|^{p/2}}\right) -\Lambda|D\Phi_0|^{p+1} \nonumber\\&\geq |x|_{b}^{-(a+1)(p+1)}  \left(b(a+1)\mathcal{M}^-\left( \frac{x^{b/2}}{|x|_b^{1/2}} \otimes  \frac{x^{b/2}}{|x|_b^{1/2}} \right) +(b-1) \mathcal{M}^-\left( -I_n\right) -C(b,n)\Lambda\right)  \nonumber \\
    &\geq|x|_{b}^{-(a+1)(p+1)}  \left( (a+1)c(b,n)\lambda - C(b,n)\Lambda \right) > 1 \label{MPhi},
\end{align}
by choosing $a=a(n,p,\lambda, \Lambda)>1$ large enough.

We now prove the following doubling property lemma.
\begin{lem} \label{doublem}
    There exist $\mu<1$ and $M_2>1$ depending on $n,\lambda,\Lambda$ and $p \geq 0$ such that if $u \in C(Q_{6n})$ satisfies
    \begin{align*}
    \begin{cases}
        u \geq 0 \text{ in } Q_{6n},\\
        \mathcal{M}^-\left( \diag{|D_iu|^{p/2}} D^2u \diag{|D_iu|^{p/2}} \right) -\Lambda|Du|^{p+1} \leq f(x) \text{ in } Q_{6n}, \\
        \norm{f}_{L^n(Q_{6n})} \leq \mu,\\
        u>M_2 \text{ in } Q_{1/4n},
    \end{cases}
    \end{align*}
    then $u>1$ in $Q_3$.
\end{lem}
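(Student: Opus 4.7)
The plan is to argue by contradiction in the same sliding-barrier spirit as Lemma \ref{measlem}, using the anisotropic barrier $\Phi_0$ in place of the paraboloid $\varphi$. Suppose there exists $x_0 \in Q_3$ with $u(x_0) \leq 1$; since $u > M > 1$ on $Q_{1/4n}$, we may assume $x_0 \in Q_3 \setminus Q_{1/4n}$. After replacing $u$ by its inf-convolution, treat $u$ as semiconcave and twice differentiable almost everywhere.

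Fix a large constant $A > 0$ depending on $n, p, \lambda, \Lambda, M$, and a vertex set $V \subset Q_{1/4n}$ (e.g.\ a slightly smaller cube) such that $Q_{6n} \setminus Q_{1/8n}(y)$ contains $x_0$ for every $y \in V$. For each $y \in V$, consider the translated barrier
\begin{align*}
\tilde\Phi_y(z) := A\Phi_0(z-y) + c_y,
\end{align*}
where $c_y$ is the largest constant for which $\tilde\Phi_y \leq u$ on $Q_{6n} \setminus Q_{1/8n}(y)$. The boundary bounds $u \geq 0$ on $\partial Q_{6n}$ and $u > M$ on $\partial Q_{1/8n}(y) \subset Q_{1/4n}$, combined with the contradiction hypothesis $\tilde\Phi_y(x_0) > 1 \geq u(x_0)$ (valid for $A$ sufficiently large), force the touching point $x(y)$ to lie strictly interior to the annular region, where the barrier inequality \eqref{MPhi} applies.

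At each touching point $x \in T := \{x(y) : y \in V\}$ with vertex $y = m(x)$, the viscosity $\mathcal{M}^-$-inequality applied to $\tilde\Phi_y$ combined with the $(p+1)$-homogeneous scaling of \eqref{MPhi} forces $f(x) \geq c A^{p+1}$ for some $c = c(n, p, \lambda, \Lambda) > 0$; Chebyshev's inequality then gives $|T| \leq (cA^{p+1})^{-n} \norm{f}_{L^n}^n \leq (cA^{p+1})^{-n}\mu^n$. Simultaneously, differentiating the gradient identity $Du(x) = A D\Phi_0(x - y)$ and reproducing the matrix computation of Case 1 of Lemma \ref{measlem} gives $|\det D_xy| \leq C(1 + |f(x)|)^n$ wherever $\Phi_0$ is $C^2$. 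Since $\Phi_0$ fails to be $C^2$ on the coordinate hyperplanes $\bigcup_i \{z_i - y_i = 0\}$, we decompose $T = \bigsqcup_I T_I$ by degeneracy patterns and, for each $\emptyset \subsetneq I \subsetneq \{1, \ldots, n\}$, restrict to the $|I|$-dimensional slice $H(a_j) = \bigcap_{j \notin I}\{z_j = a_j\}$, running the Lipschitz / area-formula / Fubini sequence of Case 3 of Lemma \ref{measlem} verbatim.

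The resulting estimate $|V| \leq C(|T| + \norm{f}_{L^n}^n) \leq C\mu^n((cA^{p+1})^{-n} + 1)$, combined with the fact that $|V|$ is a fixed positive quantity, produces the desired contradiction once $\mu$ is small enough (depending on $n, p, \lambda, \Lambda, M$). The main technical obstacle is exactly that of Lemma \ref{measlem}: the barrier $\Phi_0$ is not $C^2$ across the coordinate hyperplanes through its vertex, which the slicing argument handles. A secondary obstacle, specific to this lemma, is the interior-touching requirement, guaranteed by calibrating $A$ against $M$, $\lambda$, $\Lambda$ so that $\tilde\Phi_y$ exceeds $u$ at $x_0$ but remains below $u$ on both components of the boundary of the annular region.
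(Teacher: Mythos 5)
Your overall strategy is the paper's: argue by contradiction, slide the anisotropic barrier built from $\Phi_0$ from a vertex set of fixed measure inside $Q_{1/4n}$, force the touching points into the annulus $Q_{6n}\setminus Q_{1/4n}$ by the boundary values ($u\ge 0$ outside, $u>M$ with $M$ chosen at least as large as the barrier near the inner boundary, and the barrier gap at $x_0$), then bound $|V|$ by the area formula applied to the vertex map coming from the gradient identity, handling the coordinate hyperplanes where $\Phi_0$ is not $C^2$ by the same slicing decomposition as in Lemma \ref{measlem}, and conclude $|V|\le C\mu^n$, a contradiction. Your bookkeeping differs only cosmetically: you use the crude bound $|\det D_xy|\le C(1+|f|)^n$ plus a Chebyshev estimate $|T|\le (cA^{p+1})^{-n}\mu^n$ from $f\ge cA^{p+1}$ at touching points, whereas the paper keeps the strict positivity of the barrier quantity inside the trace estimate, obtaining $f(x)\ge\lambda\operatorname{tr}(A)$ and hence $|\det D_xy|\le C|f|^n$ directly, so no $|T|$ term ever appears; both routes give the same conclusion. (Also, choose the constants in the right order: first the dilation factor of the barrier from the geometry, then $M$ as its sup near the vertex; as written, ``$A$ depending on $M$'' together with ``$M$ large compared to $A\Phi_0$ on $\partial Q_{1/8n}(y)$'' reads circular, though this is easily repaired since $M$ is at your disposal.)

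The one place where your plan is asserted on incorrect grounds is the Jacobian bound. It does not follow by ``reproducing the matrix computation of Case 1 of Lemma \ref{measlem}'': that computation hinges on the paraboloid identity $-\diag{|D_i\varphi|^{p/2}}D^2\varphi\diag{|D_i\varphi|^{p/2}}=cI_n$, which makes the vertex map coordinatewise explicit, $y_i=x_i+cK^{-(1+p)}|D_iu|^pD_iu$. For the barrier the vertex map is $y=x-(D\Phi)^{-1}(Du(x))$, and to control $\det D_xy$ one needs the explicit inverse $(D\Phi)^{-1}$, the factorization $D\big((D\Phi)^{-1}\big)(q)=(\text{bounded scalar})\,\diag{|q_i|^{p/2}}B(q)\diag{|q_i|^{p/2}}$ with $|\det B|\le C$, and the lower bound $|Du|\ge c>0$ on the touching set (valid because touching points lie in the annulus at distance comparable to $1$ from the vertex); this is the main new computation in the paper's proof and must be supplied. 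Similarly, your Chebyshev step invokes the viscosity inequality with $\tilde\Phi_y$ as test function, which is legitimate only on the fully nondegenerate part of $T$; on a degenerate piece $T_I$ you must pass to the slice and use the $k$-dimensional analogue of \eqref{MPhi} for the restricted barrier (this works precisely because the slice passes through the vertex, so the restriction of $\Phi_0$ is exactly the $k$-dimensional barrier), together with the pointwise inequality at a.e.\ touching point; ``running Case 3 of Lemma \ref{measlem} verbatim'' does not by itself deliver either of these barrier-specific facts. Both gaps are fillable and the claimed estimates are true, but they are exactly the steps the paper's proof of Lemma \ref{doublem} spends its effort on.
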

\begin{proof}
    We see that $u$ is semiconcave by using the inf convolution of $u$.
    We argue by contradiction, assuming that $u(x_0) \leq 1$ for some $x_0 \in Q_3$.
    We define the barrier function
    \begin{align} \label{barrier}
        \Phi(x) = K(\Phi_0(x) -\Phi_0(5ne_1)) \quad \text{ in } \mathbb{R}^n \setminus Q_{1/8n},
    \end{align}
    where $K>1$ is chosen large enough so that $\Phi(x) >2$ in $Q_{4}$,
    and extend $\Phi$ smoothly to all of $\mathbb{R}^n$.
    The vertex set is $V = Q_{1/8n}$  and we slide the barrier function with a vertex in $V$ from below.
    Let $T$ denote the set of corresponding touching points.
    Observe that, for $y \in Q_{1/8n}$, we have $\partial Q_{6n} \subset \{z:\Phi(z-y) \leq 0\}$ and $\Phi(x_0-y) > 2$.
    Thus, for $ z \in \partial Q_{6n} $, we have $u(z) - \Phi(z-y) \geq 0$ and $u(x_0) - \Phi(x_0-y) \leq -1$, which implies that $T \subset Q_{6n}$.
    Moreover, by choosing $M_2 = \norm{\Phi}_{L^\infty}$, we have $u(z)-\Phi(z-y) \geq 0$ for $z \in Q_{1/4n}$, which implies $T \subset Q_{6n} \setminus Q_{1/4n}$.

    Let $x \in T$ and $y \in V$ be a corresponding vertex point.
    Since $x \notin Q_{1/4n}$ and $y \in Q_{1/8n}$, the barrier function $\Phi(z-y)$ near $x$ is defined as \eqref{barrier}, and does not involve the extended part.
    By the differentiability of $u$, the vertex point $y\in V$ is uniquely defined and we say $m : T\rightarrow V$ is the function mapping $x$ into $y$.
    Furthermore, we get
     \begin{align} \label{grad2}
        Du(x)&= D\Phi(x-y) = -K|x-y|^{-(a+1)}_b (x-y)^{b-1}, \\
        D^2u(x) &\geq D^2\Phi(x-y).
        \label{hess2}
    \end{align}
    Note that $|Du(x)|>c$ for some $c=c(n,a,p)>0$ since $ |x-y| \leq 7n^2$.
    Thus, using \eqref{grad2} we have
    \begin{align} \label{vt3}
        y&=x - (D\Phi)^{-1}(Du(x)),
    \end{align}
    where $(D\Phi)^{-1}$ is the inverse function of $D\Phi$.
    
    As in the proof of Lemma \ref{measlem}, for $I \subset \{1,\cdots,n\}$ and $\epsilon>0$, we define $T^{\epsilon}_{I}$, $T_{I}$ as in \eqref{defT}
    and $V^{\epsilon}_{I} = m(T^{\epsilon}_{I})$, $V_{I}=m(T_I)$.
    Our goal is to prove $|V_I| \leq C \mu^n$ for each $I \subset \{1,\dots,n\}$.

    \textbf{ Case 1 : $I = \{1,\cdots,n\}$.}
    We write $T_{\{1,\cdots,n\}} = T_n$ and $V_{\{1,\cdots,n\}} = V_n$.
    For $x \in T^\epsilon_n$ and $y =m(x)$, there exists a barrier function $\Phi(z-y)+C$ that touches $u$ from below at $z=x$.
    Since $\Phi(z-y)$ is smooth at $z=x$ and $u$ is semiconcave, by the same argument as in Lemma \ref{measlem}, we conclude that $Du$ and $m$ are Lipschitz continuous in $T^\epsilon_n$.
    
    By differentiating \eqref{vt3} with respect to $x$, 
    \begin{align*}
        D_xy&= I -D((D\Phi)^{-1})(Du(x)) \cdot D^2u(x).
    \end{align*}
    Note that $D((D\Phi)^{-1}) \circ D\Phi(z) = (D^2 \Phi)^{-1}(z)$ by the inverse function theorem.
    By \eqref{grad2}, we get
    \begin{align*}
        D((D\Phi)^{-1})(Du(x)) = D((D\Phi)^{-1}) \circ D\Phi(x-y) = (D^2 \Phi)^{-1}(x-y),
    \end{align*}
    and so
    \begin{align*}
        D_xy&= I- (D^2 \Phi)^{-1}(x-y)  \cdot D^2u(x) \\
        &= (D^2\Phi)^{-1}(x-y) \cdot (D^2\Phi(x-y) -D^2u(x)) \\
        &= D((D\Phi)^{-1})(Du(x)) \cdot (D^2\Phi(x-y) -D^2u(x)).
    \end{align*}
    Recall that $D^2 \Phi(x-y)$ is well defined since $x \in T^\epsilon_n$.
    By direct calculation, we have
    \begin{align*}
        (D\Phi)^{-1}(x)&=  -K^{\frac{1}{ab+1}}|x|^{-\frac{a+1}{ab+1}}_{\frac{b}{b-1}}x^{\frac{1}{b-1}}
        = -K^{\frac{1}{ab+1}}|x|^{-\frac{a+1}{ab+1}}_{2+p}x^{1+p},\\
        D((D\Phi)^{-1})(x)&= -K^{\frac{1}{ab+1}} |x|^{-\frac{a+1}{ab+1}}_{2+p} \diag{|x_i|^{p/2}} B(x)\diag{|x_i|^{p/2}},
    \end{align*}
    where
    \begin{align*}
        B(x)=  \left( -\frac{a+1}{ab+1}(2+p)\frac{x^{1+p/2}}{|x|^{1/2}_{2+p}} \otimes \frac{x^{1+p/2}}{|x|^{1/2}_{2+p}}  +(1+p)I_n\right). 
    \end{align*}
    Note that $B(x)$ is a bounded matrix in the sense that it satisfies $|B_{ij}(x)| \leq C(a,p)$ for any $x\in \mathbb{R}^n$, which implies $|\det(B(x))| \leq C$.
    Using $|Du(x)| > c$, we have
    \begin{align*}
        |\det(D_xy)| &= C|Du|^{-n\frac{a+1}{ab+1}}_{2+p}\left|\det\left( \diag{|D_iu|^{p/2}}B(x) \diag{|D_iu|^{p/2}} ( D^2u(x)-D^2\Phi(x-y))\right)\right| \\
        &\leq C \left|\det\left( \diag{|D_iu|^{p/2}} (D^2u(x)-D^2\Phi(x-y)) \diag{|D_iu|^{p/2}} \right) \right|=:C|\det(A)|.
    \end{align*} 
    Note that by \eqref{hess2}, $A = \diag{|D_iu|^{p/2}} (D^2u(x)-D^2\Phi(x-y)) \diag{|D_iu|^{p/2}} $ is a nonnegative definite matrix.
    Furthermore, using \eqref{grad2} and \eqref{MPhi}, we have
    \begin{align*}
        f(x) &\geq \mathcal{M}^-\left( \diag{|D_iu|^{p/2}} D^2u(x) \diag{|D_iu|^{p/2}} \right) -\Lambda|Du|^{p+1}\\
        &\geq \mathcal{M}^-\left( \diag{|D_iu|^{p/2}} \left(D^2u(x) -D^2\Phi(x-y)\right) \diag{|D_iu|^{p/2}} \right) \\
        &+\mathcal{M}^-\left( \diag{|D_i\Phi|^{p/2}} D^2\Phi(x-y) \diag{|D_i\Phi|^{p/2}} \right)-\Lambda|D\Phi|^{p+1} \\
        &\geq  \lambda\tr{A}.
    \end{align*}
    Thus, we get
    \begin{align*}
        |\det(D_xy)|=C|\det(A)| &\leq \left(\frac{\tr{A}}{n}\right)^n \leq C|f(x)|^n.
    \end{align*}
    Applying the area theorem to the mapping $m:T^\epsilon_n \rightarrow V^\epsilon_n$, we have
    \begin{align*}
        |V_n^\epsilon| = \int_{T^\epsilon_n} |\det D_xy| \ dx \leq \int_{T^\epsilon_n} Cf(x)^n \ dx \leq C\mu^n.
    \end{align*}
    By letting $\epsilon \rightarrow0$, we obtain $|V_n| \leq C \mu^n$.

    \textbf{ Case 2 : $I = \emptyset$.} By the same argument as in Lemma \ref{measlem}, $V_\emptyset = T_\emptyset = \emptyset$.

    \textbf{ Case 3 : $ \emptyset\subsetneq I \subsetneq \{1,\cdots,n\} $.} 
    Let $k=|I|$, $J = \{1,\cdots,n\} \setminus I$, and let $(a_j)_{j\in J} \in Q^{n-k}_{1/8n}$ be $|J|$ numbers satisfying $a_j \in (-1/8n,1/8n)$.
    We define the $k$-dimensional slice $H(a_j) = \bigcap_{j \in J}\{z_j=a_j\}$, and restrict the functions $u, \Phi,m$ and the sets $T^\epsilon_I, V^\epsilon_I$ on $H(a_j) \cap Q_{1/8n}^n$, denoting them by the same notations as in Lemma \ref{measlem}.
    Note that if $x \in T^\epsilon_I(a_j)$, then $y=m(x) \in V^\epsilon_I(a_j)$ since $x_j=y_j=a_j$ for $j \in J$.
    Thus, the restricted map $m_{(a_j)} : T^{\epsilon}_{I}(a_j) \rightarrow V^{\epsilon}_{I}(a_j)$ is well defined.
    Moreover, the restricted barrier $\Phi_{(a_j)}(z-y)$ touches $u_{(a_j)}$ from below at $z=x$, by which
    \begin{align*}
        Du_{(a_j)}(x)= D\Phi_{(a_j)}(x-y) \in \mathbb{R}^{k}, \quad D^2u_{(a_j)}(x) \geq D^2\Phi_{(a_j)}(x-y)\in S(k).
    \end{align*}
    Since $\Phi_{(a_j)}(z-y)$ is smooth at $z=x$ and $u$ is semiconcave, it follows by the same argument as in Lemma \ref{measlem} that $Du_{(a_j)}$ and $m_{(a_j)}$ are Lipschitz continuous in $T^\epsilon_I(a_j)$.
    Moreover, we have
    \begin{align*}
        y&=x - (D\Phi_{(a_j)})^{-1}(Du_{(a_j)}(x)).
    \end{align*}
    Differentiating the above equation with respect to $x$ and using the same argument as in \textbf{Case 1},
    \begin{align*}
        D_xy= D((D\Phi_{(a_j)})^{-1})(Du_{(a_j)}(x)) \cdot (D^2\Phi_{(a_j)}(x-y) -D^2u_{(a_j)}(x)).
    \end{align*}
    Recall that $D^2 \Phi_{(a_j)}(x-y)$ is well defined since $x \in T^\epsilon_I(a_j)$.
    Note that we have
    \begin{align*}
        D((D\Phi_{(a_j)})^{-1})(x)&= -K^{\frac{1}{ab+1}} |x|^{-\frac{a+1}{ab+1}}_{2+p} \diag{|x_i|^{p/2}} B(x) \diag{|x_i|^{p/2}},
    \end{align*}
    where
    \begin{align*}
        B(x)=  \left( -\frac{a+1}{ab+1}(2+p)\frac{x^{1+p/2}}{|x|^{1/2}_{2+p}} \otimes \frac{x^{1+p/2}}{|x|^{1/2}_{2+p}}  +(1+p)I_{k}\right),
    \end{align*}
    which satisfies $|\det(B(x))| \leq C$.
    Using $|Du(x)|>c$, we get
    \begin{align*}
        |\det(D_xy)| \leq C \left|\det\left( \diag{|D_iu_{(a_j)}|^{p/2}} (D^2u_{(a_j)}(x)-D^2\Phi_{(a_j)}(x-y)) \diag{|D_iu_{(a_j)}|^{p/2}} \right) \right|=:C|\det(A)|
    \end{align*} 
    with $A=\diag{|D_iu_{(a_j)}|^{p/2}} \left(D^2u_{(a_j)}(x) -D^2\Phi_{(a_j)}(x-y)\right) \diag{|D_iu_{(a_j)}|^{p/2}} \geq 0$.
    Moreover, by the same calculation as in \eqref{MPhi} replacing $n$ by $k$, we obtain
    \begin{align*}
        \mathcal{M}^-\left( \diag{|D_i\Phi_{(a_j)}|^{p/2}} D^2\Phi_{(a_j)}\diag{|D_i\Phi_{(a_j)}|^{p/2}} \right) -\Lambda|D\Phi_{(a_j)}|^{p+1} >1.
    \end{align*}
    Recalling that $|D_ju(x)| = 0$ for $j \in J$, we find by \eqref{IPDE} that
    \begin{align*}
        f_{(a_j)}(x) &\geq \mathcal{M}^-\left( \diag{|D_iu|^{p/2}} D^2u(x) \diag{|D_iu|^{p/2}} \right) -\Lambda|Du|^{p+1} \\
        &= \mathcal{M}^-\left( \diag{|D_iu_{(a_j)}|^{p/2}} D^2u_{(a_j)}(x) \diag{|D_iu_{(a_j)}|^{p/2}} \right) -\Lambda|Du_{(a_j)}|^{p+1}\\
        &\geq \mathcal{M}^-\left( \diag{|D_iu_{(a_j)}|^{p/2}} \left(D^2u_{(a_j)}(x) -D^2\Phi_{(a_j)}(x-y)\right) \diag{|D_iu_{(a_j)}|^{p/2}} \right) \\
        & +\mathcal{M}^-\left( \diag{|D_i\Phi_{(a_j)}|^{p/2}} D^2\Phi_{(a_j)}(x-y) \diag{|D_i\Phi_{(a_j)}|^{p/2}} \right) -\Lambda|D\Phi_{(a_j)}|^{p+1}  \\
        &\geq  \lambda\tr{A}.
    \end{align*}
    Thus, we get
    \begin{align*}
        |\det (D_xy)|=C|\det(A)| &\leq \left(\frac{\tr{A}}{k}\right)^{k} \leq C|f_{(a_j)}(x)|^{k}.
    \end{align*}
    Therefore, by Fubini's theorem and the area formula applied to $m_{(a_j)} : T^{\epsilon}_{I}(a_j) \rightarrow V^{\epsilon}_{I}(a_j)$, we have 
    \begin{align*}
        |V^\epsilon_I|&= \int_{Q_{1/8n}^{n-k}}|V_I^\epsilon(a_j)| \  da_J  \\
        &= \int_{Q_{1/8n}^{n-k}}\int_{T^\epsilon_I(a_j)} |\det D_xy| \ dx_I da_J \\
        & \leq \int_{Q_{1/8n}^{n-k}}\int_{T^\epsilon_I(a_j)} C|f_{(a_j)}(x)|^{k} \ dx_I da_J \\
        & = \int_{T^\epsilon_I} C|f(x)|^{k} \ dx \leq C\mu^n.
    \end{align*}
    Letting $\epsilon \to 0$, we obtain $|V_I| \leq C \mu^n$.

    \textbf{Conclusion.}
    Therefore, we have $|V_I| \leq C\mu^n$ for every $I \subset \{1,\cdots,n\}$.
    Using \eqref{TV}, we have
    \begin{align*}
        |Q_{1/8n}|=|V| \leq \sum_{I \subset \{1,\cdots,n\}} |V^\epsilon_I| \leq  C2^n\mu^n.
    \end{align*}
    This leads to a contradiction if we choose $\mu > 0$ sufficiently small, depending on $n,\lambda,\Lambda,p$.
\end{proof}
By combining the two lemmas above, Lemma \ref{measlem} and Lemma \ref{doublem}, we obtain the following corollary.
\begin{cor} \label{meascor}
    There exist $\delta<1$, $\mu<1$ and $M>1$ such that for any $k\geq1, r\leq1$ and $u \in C(Q_{6nr})$ satisfying
    \begin{align*}
    \begin{cases}
        u \geq 0 \text{ in } Q_{6nr},\\
        \mathcal{M}^-\left( \diag{|D_iu|^{p/2}} D^2u \diag{|D_iu|^{p/2}} \right) -\Lambda|Du|^{p+1}\leq f(x) \text{ in } Q_{6nr}, \\
        \norm{f}_{L^n(Q_{6nr})} \leq \mu,\\
        |\{u>kM\} \cap Q_r| \geq (1-\delta)|Q_r|,
    \end{cases}
    \end{align*}
    then $u>k$ in $Q_{3r}$.
\end{cor}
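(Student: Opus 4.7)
The plan is a straightforward two-step rescaling argument that chains Lemma \ref{measlem} and Lemma \ref{doublem}. Let $M_1$ and $M_2$ denote the constants named $M$ in Lemma \ref{measlem} and Lemma \ref{doublem} respectively, and set $M := M_1 M_2$; likewise take $\mu$ and $\delta$ to be the minimum of the two corresponding constants from the two lemmas. The goal is to show that the stated hypotheses transfer, after suitable rescaling, first to the hypotheses of Lemma \ref{measlem} and then to those of Lemma \ref{doublem}.

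\textbf{Step 1 (apply the measure estimate).} Define
$\tilde u(x) := \dfrac{u(rx)}{kM_2}$ on $Q_{6n}$. Because $r\le 1$, $k\ge 1$ and $M_2>1$, the scaling computation of Remark \ref{scaling} shows that $\tilde u\ge 0$ satisfies the same type of inequality on $Q_{6n}\supset Q_1$ with right-hand side
$\tilde f(x)=\frac{r^{p+2}}{(kM_2)^{p+1}}f(rx)$, and one checks
$\|\tilde f\|_{L^n(Q_1)}= \frac{r^{p+1}}{(kM_2)^{p+1}}\|f\|_{L^n(Q_r)}\le \|f\|_{L^n(Q_{6nr})}\le\mu$. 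The measure hypothesis $|\{u>kM\}\cap Q_r|\ge(1-\delta)|Q_r|$ becomes $|\{\tilde u>M_1\}\cap Q_1|\ge(1-\delta)|Q_1|$ after a change of variables. Lemma \ref{measlem} then yields $\tilde u>1$ in $Q_{1/4n}$, i.e.\
$u>kM_2$ in $Q_{r/4n}$.

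\textbf{Step 2 (apply the doubling property).} Now rescale differently: set
$\bar u(x) := \dfrac{u(rx)}{k}$ on $Q_{6n}$. The same scaling check gives $\bar u\ge 0$, the same type of inequality with right-hand side $\bar f(x)=\frac{r^{p+2}}{k^{p+1}}f(rx)$ and $\|\bar f\|_{L^n(Q_{6n})}\le\mu$. From Step 1, $\bar u=M_2\tilde u>M_2$ in $Q_{1/4n}$. Lemma \ref{doublem} then gives $\bar u>1$ in $Q_3$, i.e.\ $u>k$ in $Q_{3r}$, which is the conclusion.

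\textbf{Main obstacle.} The only delicate point is verifying that all three standing hypotheses (PDE on the correct cube, $L^n$-smallness of the rescaled right-hand side, and the measure condition) transfer under the rescalings with constants independent of $k$ and $r$. This relies on $r\le 1$ to keep the coefficient of the gradient term $\Lambda|Du|^{p+1}$ bounded by $\Lambda$ after rescaling (as already noted in Remark \ref{scaling}), and on $k\ge 1$, $M_2>1$ to ensure that dividing $u$ by $kM_2$ (resp.\ by $k$) only shrinks the $L^n$-norm of $f$. Once this bookkeeping is in place, the two lemmas combine mechanically to give the corollary.
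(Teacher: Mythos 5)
Your proposal is correct and follows essentially the same route as the paper: factor $M=M_1M_2$, rescale $u(rx)/(kM_2)$ to invoke Lemma \ref{measlem}, then rescale $u(rx)/k$ to invoke Lemma \ref{doublem}, with Remark \ref{scaling} and $r\le1$, $k\ge1$ handling the bookkeeping of the gradient term and of $\norm{f}_{L^n}$. Your version just spells out the norm computations that the paper leaves implicit.
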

\begin{proof}
    Write $M=M_1M_2$ where $M_1$ is the constant in Lemma \ref{measlem} and $M_2$ is the constant in Lemma \ref{doublem}.
    Then by Remark \ref{scaling}, $v(x)=\frac{u(rx)}{kM_2} \in C(Q_{6n})$ satisfies all the assumptions of Lemma \ref{measlem}, and so $v>1$ in $Q_{1/4}$, which implies $u_r(x)=\frac{u(rx)}{k}>M_2$.
    Applying Lemma \ref{doublem} to $u_r$, we obtain $u>k$ in $Q_{3r}$.
\end{proof}

Using the above corollary and Calder\'on-Zygmund cube decomposition (Lemma \ref{CZ}), we have the following $L^\epsilon$ estimate (See \cite{Caffarelli95}, Lemma 4.6).
\begin{thm} \label{Lep}
    There exist $\delta<1$, $\epsilon<1$ and $\mu<1$ such that for any $u \in C(Q_{6nr})$ satisfying
    \begin{align*}
    \begin{cases}
        u \geq 0 \text{ in } Q_{6n},\\
        \mathcal{M}^-\left( \diag{|D_iu|^{p/2}} D^2u \diag{|D_iu|^{p/2}} \right) -\Lambda|Du|^{p+1} \leq f(x) \text{ in } Q_{6n}, \\
        \norm{f}_{L^n(Q_{6n})} \leq \mu,\\
        \inf_{Q_3}u \leq 1,
    \end{cases}
    \end{align*}
    then for any $t > 0$,
    \begin{align*}
        |\{u>t\}\cap Q_1| <Ct^{-\epsilon}.
    \end{align*}
\end{thm}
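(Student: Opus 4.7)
The plan is to prove the polynomial decay $|\{u>t\}\cap Q_1|\le Ct^{-\epsilon}$ by combining Corollary \ref{meascor} with the Calder\'on--Zygmund cube decomposition (Lemma \ref{CZ}), following the standard Caffarelli--Cabr\'e scheme (\cite{Caffarelli95}, Lemma 4.6). The Corollary is the only input that knows about the pseudo-$p$-Laplacian structure, so from this point onward the argument is purely combinatorial / measure-theoretic.

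First, I would define the level sets $A_k:=\{u>M^k\}\cap Q_1$ for $k=0,1,2,\dots$, where $M>1$ is the constant from Corollary \ref{meascor}, and aim to prove the geometric decay
\[
 |A_{k+1}|\le (1-\delta)\,|A_k|\qquad \text{for all } k\ge 0,
\]
with $\delta$ the constant of Corollary \ref{meascor}. Iterating gives $|A_k|\le (1-\delta)^k|Q_1|$. Setting $\epsilon:=\log(1/(1-\delta))/\log M$ and, for $t\ge 1$, choosing the unique $k$ with $M^k\le t<M^{k+1}$, one obtains
\[
 |\{u>t\}\cap Q_1|\le |A_k|\le (1-\delta)^k|Q_1|=M^{-\epsilon k}|Q_1|\le Ct^{-\epsilon},
\]
while for $t<1$ the bound is trivial.

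Second, the core step is verifying the hypotheses of Lemma \ref{CZ} with $E=A_{k+1}$, $F=A_k$ and parameter $1-\delta$. Condition (1), namely the base case $|A_1|\le(1-\delta)|Q_1|$, is immediate from Corollary \ref{meascor}: otherwise $u>1$ on $Q_3$, contradicting $\inf_{Q_3}u\le 1$. For condition (2), let $Q=Q_s(x_0)\subset Q_1$ be a dyadic cube with $|A_{k+1}\cap Q|>(1-\delta)|Q|$. I would rescale by defining $v(y):=u(x_0+sy)/M^k$; by Remark \ref{scaling} it satisfies an inequality of the same form with right-hand side $\tilde f$ whose $L^n$-norm on the rescaled domain is dominated by $\mu$ (since $s\le 1$ and the scaling factor is favorable). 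The hypothesis on $Q$ rewrites as $|\{v>M\}\cap Q_1|>(1-\delta)|Q_1|$, so Corollary \ref{meascor} applied to $v$ yields $v>1$ on $Q_3$, i.e., $u>M^k$ on $Q_{3s}(x_0)$. Since the predecessor $\tilde Q$ of any dyadic child $Q$ of half-side $s$ is contained in the concentric cube $Q_{3s}(x_0)$ (its center is shifted by at most $s$ in each coordinate), this gives $\tilde Q\subset A_k$.

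The main technical nuisance, and the only place where some care is needed, is the bookkeeping for the rescaling in condition (2): one must verify that the ambient cube $Q_{6ns}(x_0)$ over which Corollary \ref{meascor} needs to be applied is contained in the original domain $Q_{6n}$ on which we have the PDE and the $L^n$-bound on $f$, and check that the scaling factor in Remark \ref{scaling} preserves the smallness $\|f\|_{L^n}\le\mu$. Both hold with room to spare once $s\le 1$ (with a harmless initial restriction of the starting dyadic cube and a possible slight enlargement of the ambient domain in the statement), so no genuine obstacle arises; the rest of the argument is the classical iteration. I would also note that the continuity of $u$ and of $M^k$-translates lets us replace the strict inequality in the definition of $A_k$ by a non-strict one when applying Corollary \ref{meascor}, which removes any boundary issue in the contradiction argument for condition (1).
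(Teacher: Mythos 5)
Your proposal is correct and follows essentially the same route as the paper: the paper also sets $E_i=\{u>M^i\}\cap Q_1$, gets the base case $|E_1|\leq(1-\delta)|Q_1|$ from Corollary \ref{meascor} with $k=r=1$ (using $\inf_{Q_3}u\leq1$), verifies the dyadic condition by applying Corollary \ref{meascor} with $k=M^{i-1}$ on the cube $Q_r(x_0)$ so that the predecessor $\tilde Q\subset Q_{3r}(x_0)\subset E_{i-1}$, and concludes $|E_i|\leq(1-\delta)|E_{i-1}|$ via Lemma \ref{CZ}, yielding the $t^{-\epsilon}$ decay. Your explicit rescaling $v=u(x_0+s\,\cdot)/M^k$ and the containment check $Q_{6ns}(x_0)\subset Q_{6n}$ are exactly the translation/scaling bookkeeping that the paper leaves implicit in Corollary \ref{meascor} and Remark \ref{scaling}, so there is no substantive difference.
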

\begin{proof}
    It is enough to prove the following inequality
    \begin{align} \label{Lepsilon}
        |\{u>M^i\}\cap Q_1| < C(1-\delta)^i. 
    \end{align}
    Let $E_i =\{u>M^i\}\cap Q_1$.
    By $E_i \subset E_1$ and Corollary \ref{meascor} with $k=r=1$, we have $|E_i|\leq|E_1| \leq (1-\delta)|Q_1|$.
    Moreover, for any dyadic cube $Q=Q_r(x_0)$ satisfying $|E_i \cap Q| > \delta|Q|$, by applying Corollary \ref{meascor} with $k=M^{i-1}$, we obtain $\tilde{Q} \subset Q_{3r}(x_0) \subset E_{i-1}$ where $\tilde{Q}$ is the precedessor of $Q$.
    Therefore, by Lemma \ref{CZ}, we have $|E_i| \leq (1-\delta)|E_{i-1}|$, which implies \eqref{Lepsilon} and finishes the proof.
\end{proof}
\begin{proof}[Sketch of proof of Theorem \ref{Main2}.] 
Once we obtain the $L^\epsilon$ estimate, Theorem \ref{Lep}, the remaining parts of the proofs of Theorem \ref{Main2} and Corollary \ref{maincor} are standard and can be found in Chapter 4 of \cite{Caffarelli95} or Sections 6 and 7 of \cite{Imbert16}.
\end{proof}
\textbf{Data Availability}
Data sharing not applicable to this article as no
datasets were generated or analyzed during the current study.

\textbf{Conflict of interest}
The authors declared that they have no conflict of interest to
this work.

\bibliographystyle{amsplain}

\end{document}